\theoremstyle{plain}
\newtheorem{theorem}{Theorem}[section]
\newtheorem*{theorem*}{Theorem}
\newtheorem{prop}[theorem]{Proposition}
\newtheorem{cor}[theorem]{Corollary}
\newtheorem{rem}[theorem]{Remark}
\newtheorem*{mt*}{Main Theorem}
\newcommand\R{{\mathbb R}}
\newcommand\pa[1]{\partial_{#1}}
\begin{document}
\title[Totally Geodesic and Parallel Hypersurfaces of Siklos Spacetimes]{Totally Geodesic and Parallel Hypersurfaces of Siklos Spacetimes}
\author{Giovanni Calvaruso}
\address{Giovanni Calvaruso: Dipartimento di Matematica e Fisica \lq\lq E. De Giorgi\rq\rq \\
Universit\`a del Salento\\
Prov. Lecce-Arnesano \\
73100 Lecce\\ Italy.}
\email{giovanni.calvaruso@unisalento.it}
\author{Lorenzo Pellegrino}
\address{Lorenzo Pellegrino: Dipartimento di Matematica e Fisica \lq\lq E. De Giorgi\rq\rq \\
Universit\`a del Salento\\
Prov. Lecce-Arnesano \\
73100 Lecce\\ Italy.}
\email{lorenzo.pellegrino@unisalento.it}
\author{Joeri Van der Veken}
\address{Joeri Van der Veken: Department of Mathematics \\
University of Leuven\\
Celestijnenlaan 200B \\
3001 Leuven \\  Belgium.}
\email{joeri.vanderveken@kuleuven.be}

\subjclass[2020]{53B25, 53C50}
\keywords{Siklos spacetimes, parallel second fundamental form, totally geodesic and minimal hypersurfaces}
\thanks{G. Calvaruso is partially supported by funds of GNSAGA (INDAM). J. Van der Veken is supported by the Research Foundation–Flanders (FWO) and the National Natural Science Foundation of China (NSFC) under collaboration project G0F2319N, by the KU Leuven Research Fund under project 3E210539 and by the Research Foundation–Flanders (FWO) and the Fonds de la Recherche Scientifique (FNRS) under EOS Projects G0H4518N and G0I2222N}

\begin{abstract}
We {classify} and describe totally geodesic and parallel hypersurfaces for the entire class of Siklos spacetimes. A large class of minimal hypersurfaces is also described.
 \end{abstract}

\maketitle

\section{Introduction}

Siklos spacetimes \cite{Si} are a well-known class of solutions to Einstein's field equations with Einstein-Maxwell source. They fall within type $N$ in { Petrov's classification}, have a negative cosmological constant and admit a  null non-twisting Killing field.  An arbitrary Siklos metric is given by
\begin{equation}\label{Smet}
g=  \frac{\beta^2}{x_3 ^2}\left(2dx_1 dx_2 +H dx_2 ^2+dx_3 ^2+dx_4 ^2 \right)
\end{equation}
{with} respect to some global coordinates $(x_1,x_2,x_3,x_4)$, where $H=H(x_2,x_3,x_4)$ is any smooth function of three variables (see \cite{Si}, \cite{Pod}) and $\beta^2=-\frac{3}{\Lambda}$, with $\Lambda <0$ denoting the cosmological constant.

Since their introduction, several relevant geometric and physical properties of Siklos metrics have been studied. In particular, Siklos spacetimes have been interpreted as exact gravitational waves propagating in the anti-de Sitter universe \cite{Pod}. Moreover, they correspond to the subclass $(IV)_0$ of  Kundt spacetimes {\cite{ORR}}. Further results on the geometry of Siklos spacetimes may be found in \cite{ChomS}--\cite{CnoteMS}, \cite{CZ}  and references therein. Finally, some well-known examples of homogeneous spacetimes introduced and studied by Defrise \cite{Defr},  Kaigorodov 
\cite{Kai},  Ozsv\'{a}th \cite{Ozs} fall within the class of Siklos metrics (see also the last section of this paper).

A submanifold $M$ of a pseudo-Riemannian manifold $\bar M$ is said to be \textit{totally geodesic} when its second fundamental form $h$ vanishes identically. As a consequence, a totally geodesic submanifold is characterized by the geometric property that its geodesics are also geodesics in the ambient space. More in general, a submanifold $M$ is said to be \textit{parallel} when its second fundamental form $h$ is covariantly constant. Consequently, all the extrinsic invariants of $M$ are covariantly constant. The classification of parallel and totally geodesic hypersurfaces of a given pseudo-Riemannian manifold is a natural problem, which enriches our understanding of the properties of the ambient space.  

Classifications of parallel surfaces have been obtained in several three-dimensional Lorentzian ambient spaces (see for example \cite{CV1}--\cite{CV4}). Also because of their relevance in Theoretical Physics, four-dimensional Lorentzian manifolds are natural candidates for this kind of study, but in a four-dimensional ambient space the investigation of parallel and totally geodesic hypersurfaces {is clearly more difficult than the corresponding three-dimensional problem}. Parallel hypersurfaces {of} some four-dimensional Lorentzian and pseudo-Riemannian manifolds were classified in \cite{CPV}, \cite{CSV}, \cite{CV5}, \cite{DV}.  

{Minimal hypersurfaces are a well-known and natural generalization of totally geodesic hypersurfaces. Instead of the vanishing of the second fundamental form, minimal hypersurfaces are defined by the the vanishing of the trace of $h$. {A further generalization is given by {\em hypersurfaces of constant mean curvature (CMC)}, for which the trace of $h$ is constant. Minimal and CMC hypersurfaces are intensively studied in different contexts}. Minimal hypersurfaces carry a natural and  relevant geometric meaning, as they are stationary with respect to the variation of the induced volume 
\cite{Seno}. Moreover, timelike minimal Lorentzian hypersurfaces appear in some natural ways in in Theoretical Physics. In fact,
they can be seen as simple but nontrivial examples of $D$-branes \cite{Allen} and in the Minkowski spacetime they {are} \lq\lq kinks\rq\rq (timelike submanifolds containing concentrations of Lagrangian density) \cite{Neu}.  }

The aim of this paper is to achieve a complete classification and description of parallel and totally geodesic hypersurfaces for general Siklos metrics. As a byproduct, we shall also explicitly describe {some classes of minimal and CMC hypersurfaces for  large families of Siklos spacetimes.} 
The paper is organized in the following way. In Section~2 we give the Levi-Civita connection and curvature of Siklos metrics and provide some basic information on parallel and totally geodesic hypersurfaces.  
In Section~3 we study hypersurfaces of Siklos spacetimes admitting a Codazzi second fundamental form, which include parallel (and { hence,} totally geodesic) hypersurfaces. { We also show that all the examples within a large class of hypersurfaces with a Codazzi second fundamental form are minimal hypersurfaces. We then give the complete classification of totally geodesic hypersurfaces of Siklos spacetimes. Finally, in Section~4 we {investigate parallel hypersurfaces of Siklos spacetimes. {We describe a large class of proper parallel hypersurfaces and also show that they are CMC. Moreover, we achieve a complete classification of proper parallel hypersurfaces for a subclass of Siklos spacetimes containing several well-known homogeneous examples}.

\section{Preliminaries}
\setcounter{equation}{0}

\subsection{On the curvature and connection of Siklos metrics}
Let $g$ be the metric described by  \eqref{Smet} with respect to the coordinate system $(x_1,x_2,x_3,x_4)${ ($=(v,u,x,y)$ in the original paper \cite{Si})}.
We shall denote by $\{ \pa {_i}=\frac{\partial}{\partial x_i}\}$ the basis of coordinate vector fields. {  By 
\eqref{Smet}, Siklos metrics $g$ are completely determined by 
$$
g(\pa1,\pa2)=g(\pa3,\pa3)=g(\pa4,\pa4)=\frac{\beta^2}{x_3^2}, \qquad g(\pa2, \pa2)=\frac{\beta^2}{x_3^2}H.
$$
In addition denote} by $H'_i =\partial _{i} H, H''_{ij}=\partial^2 _{ij} H,\ldots$ the partial derivatives of the defining function $H$.

By means of the {\em Koszul formula} (see also \cite{ChomS}), the Levi-Civita connection $\nabla$ of $g$ is explicitly described by the following possibly non-vanishing components
\begin{equation}\label{nabla}
\begin{array}{ll}
\nabla_{\pa1}\pa2=\frac{1}{x_3}\pa 3, &\;
\nabla_{\pa1}{ \pa3}=-\frac{1}{x_3}\pa1,
 \\[6pt]
\nabla_{\pa2}\pa2=\frac{1}{2}H'_2\pa1+\frac{1}{2x_3}(2H-x_3H'_3)\pa3-\frac{1}{2}H'_4\pa4, &\;
\nabla_{\pa2}\pa4=\frac{1}{2}H'_4\pa1, 
 \\[6pt]
  \nabla_{\pa2}\pa3=\frac{1}{2}H'_3\pa1-\frac{1}{x_3}\pa2, &\;
\nabla_{\pa3}\pa4=-\frac{1}{x_3}\pa4,
 \\[6pt]
  \nabla_{\pa3}\pa3=-\frac{1}{x_3}\pa3, &\;
\nabla_{\pa4}\pa4=\frac{1}{x_3}\pa3.
\end{array}
\end{equation}

{Starting} from \eqref{nabla}, a {direct} calculation yields that with respect to the basis $\{\pa i \}$ of coordinate vector fields, {(taking into account its symmetries) the curvature tensor  is completely determined  by the following possibly 
non-vanishing components}
\begin{equation}\label{R}
\begin{array}{ll}
R(\pa1,\pa2)\pa1= -\frac{1}{x_3^2} \pa1, &\; 
R(\pa1,\pa2)\pa2=-\frac{H}{x_3^2} \pa1+\frac{1}{x_3^2}  \pa2,
\\[7pt]
R(\pa1,\pa3)\pa2=\frac{1}{x_3^2}  \pa3, &\; 
R(\pa1,\pa3)\pa3=-\frac{1}{x_3^2} \pa1,
\\[7pt]
R(\pa1,\pa4)\pa2=\frac{1}{x_3^2}  \pa4, &\;
R(\pa1,\pa4)\pa4=-\frac{1}{x_3^2} \pa1,
\\[7pt]
R(\pa2,\pa3)\pa1=\frac{1}{x_3^2} \pa3, &\;
R(\pa2,\pa3)\pa2=\frac{f_1}{2x_3^2}\pa 3+\frac{1}{2} H''_{34} \pa 4,
\\[7pt]
R(\pa2,\pa3)\pa3=-\frac{f_2}{2x_3^2}\pa 1-\frac{1}{x_3^2}\pa 2, &\;
R(\pa2,\pa3)\pa4=-\frac{1}{2}H''_{34}\pa 1,
\\[7pt]
R(\pa2,\pa4)\pa1=\frac{1}{x_3^2} \pa4, &\;
R(\pa2,\pa4)\pa2=\frac{f_3}{2x_3^2}\pa 4+\frac{1}{2}H''_{34}\pa 3,
\\[7pt]
R(\pa2,\pa4)\pa3=-\frac{1}{2}H''_{34}\pa 1, &\;
R(\pa2,\pa4)\pa4=-\frac{f_4}{2x_3^2}\pa 1-\frac{1}{x_3^2}\pa 2, 
\\[7pt]
R(\pa3,\pa4)\pa3=\frac{1}{x_3^2}  \pa4, &\;
R(\pa3,\pa4)\pa4=-\frac{1}{x_3^2} \pa3,
\end{array}
\end{equation}
where {for simplicity} we put
{ \begin{equation}\label{fi}
\begin{array}{ll}
f_1= 2H-x_3 H'_3+x^2_3 H''_{33}, &\quad f_2=x_3 H''_{33}-H'_3, 
\\[7pt]
f_3= 2H-x_3 H'_3+x^2_3 H''_{44}, &\quad f_4=x_3 H''_{44}-H'_3.
\end{array}
\end{equation}}
We must point out that, from now and throughout this paper, we will always use the sign convention $R(X,Y)=[\nabla_X, \nabla_Y]-\nabla_{[X,Y]}$.

We may refer to either \cite{Pod} or \cite{ChomS} for further details on the geometry {of} Siklos spacetimes. Here we only recall that a Siklos metric $g$, defined by an arbitrary function $H=H(x_2,x_3,x_4)$,
\begin{itemize}
\item is Einstein if and only if  $\frac{2}{x_3} H'_3 -  H''_{33} - H''_{44} =0$;
\item is (locally) conformally flat if and only if $H''_{33} - H''_{44} =H''_{34} =0$;
\item has constant sectional curvature if and only if both the above sets of equations are satisfied. In this case, $(\bar M,g)$ is isometric to the anti-de Sitter space.
\end{itemize}

\subsection{On parallel and totally geodesic hypersurfaces}

Let $F : M^n \to \bar M^{n+1}$ be an isometric immersion of pseudo-Riemannian manifolds. Consider a unit normal vector field  $\xi$ along the image of $F$ {with} $g(\xi,\xi) = \varepsilon \in \{ -1,1\}$. Let $\nabla ^M$ and $\nabla$ denote the Levi-Civita connections of $M^n$ and $\bar M^{n+1}$ respectively. The well known {\em formula of Gauss} {states}
\begin{align} \label{fG}
\nabla_X Y = \nabla ^M _X Y + h(X,Y) \xi ,
\end{align}
for all vector fields $X, Y$ tangent to $M^n$ (we will always identify vector fields
tangent to {{ $M^n$} with their images} under $dF$). The formula of Gauss defines the \emph{second fundamental form} $h$ of the immersion, which is a symmetric $(0,2)$-tensor field on $M^n$.

{ $M^n$} is a \emph{totally geodesic hypersurface} if $h=0$. This is equivalent to requiring that every geodesic of { $M^n$} is also a geodesic of the ambient space { $\bar M^{n+1}$}. 

The covariant derivative $\nabla^M h$ is given by
$$
(\nabla^M h)(X,Y,Z) = X(h(Y,Z)) - h(\nabla^M _XY,Z) - h(Y,\nabla ^M _XZ),
$$
for all vector fields $X,Y,Z$ tangent to { $M^n$}. The hypersurface is said to be \emph{parallel} {(or to have parallel
second fundamental form)} if 
\begin{align} \label{nablahis0}
\nabla^M h = 0.
\end{align}
Clearly, totally geodesic hypersurfaces are parallel.



Let $R^M$ and $R$ denote the Riemann-Christoffel curvature tensors of $M^n$ and $\bar M^{n+1}$ respectively. The {\em equations of Gauss and Codazzi}, which directly follow from \eqref{fG}, { res\-pectively state}
\begin{align}
\label{eG}
& g(R (X,Y)Z,W) = g(R^M (X,Y)Z,W) + \varepsilon \left( h(X,Z)h(Y,W) - h(X,W)h(Y,Z) \right),\\[2pt]
\label{eC} &  g(R (X,Y)Z,\xi) = \varepsilon \left(
(\nabla^M h)(X,Y,Z) - (\nabla^M h)(Y,X,Z) \right),
\end{align}
where $X$, $Y$, $Z$ and $W$ are tangent to $M^n$. 

The hypersurface is said to have a {\em Codazzi second fundamental form} if {$\nabla^{M} h$} is totally symmetric. By equation~\eqref{eC}, this is equivalent to requiring that $R(X,Y)\xi=0$ for all vector fields $X,Y$ tangent to { $M^n$}. It is clear that totally geodesic and parallel hypersurfaces fall within the class of hypersurfaces with a Codazzi second fundamental form.

{We end this subsection recalling some other well known generalizations of totally geodesic hypersurfaces. The {\em mean curvature} of a hypersurface $M^n$ is defined by  
$$
{\rm tr}_{g_M} h = \frac{1}{n}\sum g_M^{ij}h_{ij},
$$
where $g_M$ denotes the pullback on $M^n$ of the metric of the ambient space and $g_M^{ij}$ are the components of $(g_M)^{-1}$ with respect to a given basis of vector fields tangent to $M^n$. The hypersurface is said to be {\em minimal} (respectively, {\em of constant mean curvature, {\em or} CMC)} if ${\rm tr}_{g_M} h =0$ (respectively,  ${\rm tr}_{g_M} h=\kappa$ for some real constant $\kappa$).}
%

\section{Totally geodesic hypersurfaces}
\setcounter{equation}{0}

Let $F: M \rightarrow (\bar M,g)$ denote the immersion of a hypersurface into a Siklos spacetime and $\xi$ a unit  normal vector field to the hypersurface. {We look for} some necessary algebraic conditions on the components of 
$\xi$ with respect to the frame $\{\pa1, \pa2, \pa3, \pa4\}$ on $\bar M$, in order for $M$ to have a Codazzi second fundamental form. {We prove the following.}

\begin{theorem}\label{t1}
Let $F: M \rightarrow \bar M$ be a hypersurface with a Codazzi second fundamental form and $\xi$ a unit normal vector field,  with $ g(\xi, \xi) =\varepsilon \in \{-1, 1\}$. Consider the coordinate vector fields $\{\pa i\}$ on 
$\bar M$ introduced above. Then, every point of $M$ has a neighborhood $U\subseteq M$ on which
$$
\xi=a \pa 1+\frac{x_3}{\beta} \cos \theta \pa 3 +\frac{x_3}{\beta} \sin \theta \pa 4
$$
for some { functions} $a, \theta :U\rightarrow \R$,  where $(H''_{33}-H''_{44})\sin(2\theta)=2x_3 H''_{34} \cos(2\theta)$. 
{In particular, $M$ is necessarily timelike.}
\end{theorem}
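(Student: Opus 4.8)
The plan is to use the characterization recalled after \eqref{eC}: $M$ has a Codazzi second fundamental form if and only if $R(X,Y)\xi=0$ for all $X,Y$ tangent to $M$. Since $g(R(X,Y)\xi,\xi)=0$ by the antisymmetry of the curvature in its last two arguments, the vector $R(X,Y)\xi$ is automatically tangent to $M$, so the condition is precisely the vanishing of the $T\bar M$-valued two-form $\Phi(X,Y):=R(X,Y)\xi$ on the tangent distribution $\xi^{\perp}$. I would write $\xi=\sum_i a_i\pa i$ for functions $a_i$ on $U$ and, using the curvature components \eqref{R} together with the abbreviations \eqref{fi}, compute the six vectors $R(\pa i,\pa j)\xi$ explicitly. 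Here the off-diagonal term $g(\pa1,\pa2)=\beta^2/x_3^2$ gives $g(\xi,\pa1)=(\beta^2/x_3^2)a_2$, so that the null coordinate field $\pa1$ is tangent to $M$ exactly when $a_2=0$; this already singles out $a_2$ as the decisive unknown.

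Next I would fix an explicit basis of the three-dimensional distribution $\xi^{\perp}=\ker\big(g(\xi,\cdot)\big)$ and impose $\Phi=0$ on the three resulting basis bivectors. Carrying out the expansion and simplifying with the identities $f_1-2H=x_3f_2$, $f_3-2H=x_3f_4$ and $f_2-f_4=x_3(H''_{33}-H''_{44})$, which follow at once from \eqref{fi}, I expect the whole system to reduce to the three scalar equations $a_2f_2=a_2f_4=a_2H''_{34}=0$ together with the single residual relation
\[
\frac{a_3a_4}{x_3}\,(H''_{44}-H''_{33})+H''_{34}\,(a_3^2-a_4^2)=0 .
\]

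The rest is algebraic. If $f_2$, $f_4$ and $H''_{34}$ vanished simultaneously, then by the criteria recalled in Section~2 the metric would be both Einstein and conformally flat, hence of constant curvature; away from this exceptional anti-de Sitter locus the relations $a_2f_2=a_2f_4=a_2H''_{34}=0$ force $a_2=0$. The unit condition $g(\xi,\xi)=\varepsilon$ then reads $(\beta^2/x_3^2)(a_3^2+a_4^2)=\varepsilon$, which forces $\varepsilon=1$ and $a_3^2+a_4^2=x_3^2/\beta^2>0$. Writing $a_3=(x_3/\beta)\cos\theta$, $a_4=(x_3/\beta)\sin\theta$ and $a_1=a$ puts $\xi$ in the asserted form, and substituting these expressions into the residual relation turns it into $(H''_{33}-H''_{44})\sin(2\theta)=2x_3H''_{34}\cos(2\theta)$. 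Finally, since $\xi$ is then spacelike and $\bar M$ is Lorentzian, the metric induced on $M$ is Lorentzian, i.e. $M$ is necessarily timelike.

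The main obstacle I anticipate is organizational rather than conceptual: the most natural frame of $\xi^{\perp}$ degenerates precisely along $a_2=0$, which is the conclusion one is trying to reach. This forces either a genuine case distinction between $a_2\neq0$ (where $\pa1$ is transverse and an adapted frame is available) and $a_2=0$ (where $\pa1$ itself is tangent and the surviving relation is isolated), or a uniform treatment of the constraints as polynomial identities in the $a_i$. A second delicate point is that, because $g$ is off-diagonal in the $(\pa1,\pa2)$-block, lowering indices mixes $\pa1$ and $\pa2$, so one must track these contributions carefully to recognize the cancellations that expose the clean system above.
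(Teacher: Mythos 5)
Your proposal follows the same route as the paper's proof: both reformulate the Codazzi hypothesis as $R(X,Y)\xi=0$ for tangent $X,Y$, expand this in the coordinate frame using \eqref{R}, and reduce it to algebraic constraints on the components of $\xi$, with the $\pa2$-component $a_2$ (the paper's $b$) as the decisive unknown. Your algebraic reduction is right; in fact your residual relation, after substituting $a_3=\frac{x_3}{\beta}\cos\theta$, $a_4=\frac{x_3}{\beta}\sin\theta$, gives exactly $(H''_{33}-H''_{44})\sin(2\theta)=2x_3H''_{34}\cos(2\theta)$ as in the statement (the paper's intermediate equation \eqref{cond} lacks the factor $x_3$, which appears to be a typo there, since direct computation from \eqref{R} and the theorem's own statement support your version). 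The case $a_2=0$ --- unit-norm condition forcing $\varepsilon=1$, trigonometric parametrization, timelike conclusion --- is handled exactly as in the paper.

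The genuine gap is in how you dispose of $a_2\neq0$. You argue that if $f_2$, $f_4$ and $H''_{34}$ vanished simultaneously, the metric would be Einstein and conformally flat, hence anti-de Sitter, and you then work ``away from this exceptional locus''. This does not prove the stated theorem, for two reasons. First, the vanishing of $f_2$, $f_4$, $H''_{34}$ forced by the Codazzi condition holds only \emph{along} $M$, not on an open subset of $\bar M$, whereas the Einstein/conformal-flatness criteria recalled in Section~2 are conditions on the metric; equations valid on a hypersurface do not determine the ambient geometry. Second, and decisively, the theorem is asserted for every Siklos spacetime and every Codazzi hypersurface, so the locus where $f_2=f_4=H''_{34}=0$ cannot simply be excluded: $M$ may meet it, or lie entirely inside it, and at any such point the curvature tensor takes the constant-curvature form, so $R(X,Y)\xi=0$ holds there for \emph{any} unit normal and the Codazzi hypothesis places no restriction on $a_2$ at all --- your argument is silent exactly where it is needed (for $H\equiv0$ every nondegenerate hypersurface is Codazzi, e.g.\ $x_1+x_2=\mathrm{const}$, whose unit normal is proportional to $\pa1+\pa2$). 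The paper closes this case with a different idea, which is what your proposal is missing: if $a_2\neq0$, then $f_2=f_4=0$ along $M$ means $M$ satisfies the two equations $x_3H''_{33}=H'_3$ and $x_3H''_{44}=H'_3$, which do not involve $x_1$; hence the coordinate line $t\mapsto(p_1+t,p_2,p_3,p_4)$ through any point of $M$ stays in $M$, so $\pa1$ is tangent to $M$, contradicting $g(\xi,\pa1)=\frac{\beta^2}{x_3^2}\,a_2\neq0$. (Even this argument tacitly assumes that those equations genuinely cut out $M$, so the constant-curvature situation is delicate for the paper as well; but it settles every case in which the constraint locus is a proper hypersurface, which your exclusion does not.)
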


\begin{rem}
{\em Let us observe that in the statement of Theorem \ref{t1}, the quantities $x_3$ and $H''_{ij}$ mean $x_3\circ F$ and $H''_{ij}\circ F$, respectively. We shall use a similar notation from now on.}
\end{rem}

\begin{proof}
Consider $\xi= a\pa1+b\pa2+c\pa3+d\pa4$, for some functions $a, b, c, d: U\rightarrow \R$ { such that }
$g(\xi,\xi)=\varepsilon =\pm 1 \neq 0$. Then, the following vector fields are tangent to the hypersurface:
$$
\begin{array}{ll}
X_1=(a+Hb)\pa1-b\pa2, &\quad X_4=c\pa2-(a+Hb)\pa3,\\[6pt]
X_2=c\pa1-b\pa3, &\quad X_5=d\pa2-(a+Hb)\pa4,\\[6pt]
X_3=d\pa1-b\pa4, &\quad X_6=d\pa3-c\pa4.
\end{array}
$$
If $h$ is Codazzi, {then} equation \eqref{eC} yields that $R(X_i,X_j)\xi=0$ for every $i,j\in\{1,\ldots,6\}$. In particular, {we have}
\begin{align}
\label{123}
0&=R(X_1,X_2)\xi=-\frac{b^2}{2x_3}(cf_2+dx_3 H''_{34})\pa1+\frac{b^3}{2x^2_3}(f_1-2H)\pa3+\frac{b^3}{2}H''_{34}\pa4, \\[5pt]
\label{456}
0&=R(X_1,X_3)\xi=-\frac{b^2}{2x_3}(df_4+cx_3H''_{34})\pa1+\frac{b^3}{2}H''_{34}\pa3+\frac{b^3}{2x^2_3}(f_3-2H)\pa4,
\\[5pt]
\label{131415}
0&=R(X_1,X_6)\xi=b\big[(dc(f_2-f_4)+x_3 H''_{34}(d^2-c^2))\pa1
\\ \nonumber
&-\frac{b}{2x^2_3}(df_1-cx^2_3H''_{34}-2dH)\pa3
+\frac{b}{2x^2_3}(cf_3-dx^2_3H''_{34}-2cH)\pa4\big],
\\[5pt]
\label{28}
0&=R(X_4,X_5)\xi=(a+Hb)\big[-(dc(f_2-f_4)+x_3 H''_{34}(d^2-c^2))\pa1
\\ \nonumber
&+\frac{b}{2x^2_3}(df_1-cx^2_3H''_{34}-2dH)\pa3
-\frac{b}{2x^2_3}(cf_3-dx^2_3H''_{34}-2cH)\pa4\big],
\\[5pt]
\label{31}
0&=R(X_4,X_6)\xi=c\big[-(dc(f_2-f_4)+x_3 H''_{34}(d^2-c^2))\pa1
\\ \nonumber
&+\frac{b}{2x^2_3}(df_1-cx^2_3H''_{34}-2dH)\pa3
-\frac{b}{2x^2_3}(cf_3-dx^2_3H''_{34}-2cH)\pa4\big],
\\[5pt]
\label{34}
0&=R(X_5,X_6)\xi=d\big[-(dc(f_2-f_4)+x_3 H''_{34}(d^2-c^2))\pa1
\\ \nonumber
&+\frac{b}{2x^2_3}(df_1-cx^2_3H''_{34}-2dH)\pa3
-\frac{b}{2x^2_3}(cf_3-dx^2_3H''_{34}-2cH)\pa4\big].
\end{align}
We will treat separately two cases, depending on whether $b=0$ or $b\neq0$.

\medskip
\textbf{Case 1: $\boldsymbol{b}\mathbf{=0}$.} In this case, equations \eqref{28}--\eqref{34}, {together with the definitions given in \eqref{fi}, imply}
\begin{equation}\label{cond}
(H''_{33}-H''_{44})dc+H''_{34}(d^2-c^2)=0.
\end{equation}
Observe that $g(\xi, \xi)=\frac{\beta^2}{x_3^2}(c^2+d^2)=1${, so that $M$ is necessarily timelike}. The conclusion then follows setting $c=\frac{x_3}{\beta}\cos \theta$, $d=\frac{x_3}{\beta}\sin \theta$. Moreover, \eqref{cond} rewrites as
$$
{ \sin 2\theta (H''_{33}-H''_{44})=2\cos 2\theta H''_{34}.}
$$

\medskip
\textbf{Case 2: $\boldsymbol{b}\mathbf{\neq 0}$.} In this case, equation \eqref{123} implies that $H''_{34}=0$. Then, by \eqref{123} and \eqref{456} we get
$$
f_1=2H, \quad cf_2=0, \quad f_3=2H, \quad df_4=0,
$$
whence, by the definitions {\eqref{fi}} it follows at once
$$
x_3 H''_{33}=H'_3, \qquad 
x_3 H''_{44}=H'_3
$$
along the hypersurface. This means that the hypersurface $M$ is defined by some equations of the form:
\begin{equation}\label{Mcodcase2}
\begin{cases}
F_3\, H''_{33}(F_2,F_3,F_4)=H'_3(F_2,F_3,F_4), \\[6pt]
F_3 \, H''_{44}(F_2,F_3,F_4)=H'_3(F_2,F_3,F_4).
\end{cases}
\end{equation}
Let $(p_1,p_2,p_3,p_4)\in M$ and consider the curve defined by $\alpha(t)=(p_1+t,p_2,p_3,p_4)$, $t \in \R$. By \eqref{Mcodcase2} we deduce that $\alpha \subset M$ and $\alpha'(t_0)=\partial_{1_{(p_1+t_0,p_2,p_3,p_4)}}\in T_{\alpha(t_0)}M$ for every $t_0\in \R$, that is, $\pa 1$ is tangent to $M$. But this leads to a contradiction. In fact, as $\xi=a\pa1 +b\pa2 +c\pa3 +d\pa4$, we have $g(\xi,\pa1 )=\frac{\beta^2}{x_3^2}b \neq 0$, so that $\pa 1$ cannot be tangent to $M$. Therefore,  Codazzi hypersurfaces do not occur for $b\neq 0$.
\end{proof}

Consider now a hypersurface $M$ of a Siklos spacetime with a Codazzi second fundamental form, as described in Theorem~\ref{t1}. We shall discuss now the case where $\cos \theta =0$. In this case we will obtain in the following theorem the description of such hypersurfaces and in Theorem \ref{costhis0} of the totally geodesic ones. {On the other hand, totally geodesic hypersurface do not occur} in the case where $\cos\theta\neq 0$; therefore, we will later search there for parallel ones.

\begin{theorem}\label{costhis0}
Let $F: M \rightarrow \bar M$ denote a hypersurface of a Siklos spacetime with a Codazzi second fundamental form as described in Theorem~{\em\ref{t1}} and assume that $\cos\theta=0$. Then, there exist local coordinates $(u_1, u_2, u_3)$ on $M$ such that, up to isometries of the ambient space, the immersion is explicitly given by
{ \begin{align*}
F(u_1, u_2, u_3) = (u_1,u_2,u_3,f(u_2)),
\end{align*}}
where $f(u_2)$ satisfies the functional equation $H''_{34}(u_2,u_3,f(u_2))=0$. All such hypersurfaces are minimal.
\end{theorem}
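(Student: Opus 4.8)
The plan is to exploit the very explicit form of the unit normal furnished by Theorem~\ref{t1}. Since $\cos\theta=0$, we have $\sin\theta=\pm1$, and after possibly replacing $\xi$ by $-\xi$ we may assume $\sin\theta=1$, so that on $U$
$$
\xi=a\pa1+\frac{x_3}{\beta}\pa4
$$
for some function $a$. Using the metric data recalled in Section~2, a direct check gives $g(\xi,\pa1)=g(\xi,\pa3)=0$, so that $\pa1$ and $\pa3$ are everywhere tangent to $M$, while $g(\xi,\pa4)=\beta/x_3\neq0$ shows that $\pa4$ is transverse to $M$.

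First I would pin down the immersion. Because $\pa1$ and $\pa3$ are tangent at every point, $M$ is invariant under the flows $x_1\mapsto x_1+t$ and $x_3\mapsto x_3+t$; equivalently, the projection to the $(x_1,x_2,x_3)$-coordinates is a local diffeomorphism, so $M$ is locally a graph $x_4=\phi(x_1,x_2,x_3)$ with $\phi_{x_1}=\phi_{x_3}=0$. Hence $\phi=f(x_2)$ depends on $x_2$ alone, and setting $u_1=x_1,\ u_2=x_2,\ u_3=x_3$ yields the stated parametrization $F(u_1,u_2,u_3)=(u_1,u_2,u_3,f(u_2))$. The remaining tangent direction is then $e_2=\pa2+f'\pa4$, and orthogonality $g(\xi,e_2)=0$ forces $a=-\tfrac{x_3}{\beta}f'$. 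The functional equation drops out of Theorem~\ref{t1}: substituting $\theta=\pi/2$ into $(H''_{33}-H''_{44})\sin(2\theta)=2x_3H''_{34}\cos(2\theta)$ (equivalently, reading off \eqref{cond} with $c=0$, $d=x_3/\beta$) gives $H''_{34}=0$ along $M$, i.e. $H''_{34}(u_2,u_3,f(u_2))=0$.

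For the minimality claim I would compute the induced metric and the second fundamental form in the tangent frame $e_1=\pa1$, $e_2=\pa2+f'\pa4$, $e_3=\pa3$ (the coordinate vector fields of $F$). From \eqref{Smet} the only nonzero entries of $g_M$ are $g_{12}=g_{33}=\beta^2/x_3^2$ and $g_{22}=\tfrac{\beta^2}{x_3^2}(H+f'^2)$; in particular $g_{11}=g(\pa1,\pa1)=0$, so inverting $g_M$ gives $g_M^{22}=0$ (along with $g_M^{13}=g_M^{23}=0$). Next, using the connection coefficients \eqref{nabla} together with $g(\pa1,\xi)=g(\pa3,\xi)=0$, $g(\pa2,\xi)=a\beta^2/x_3^2$ and $g(\pa4,\xi)=\beta/x_3$, a short computation shows $h_{11}=h_{12}=h_{13}=h_{23}=h_{33}=0$, the only component that may survive being
$$
h_{22}=g(\nabla_{e_2}e_2,\xi)=\Big(f''-\tfrac12H'_4\Big)\frac{\beta}{x_3}.
$$
Therefore ${\rm tr}_{g_M}h=\tfrac13\sum g_M^{ij}h_{ij}=\tfrac13\,g_M^{22}h_{22}=0$, and $M$ is minimal.

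The individual computations are routine; the conceptual crux — and the step I expect to require the most care — is the first one, namely arguing rigorously that the tangency of $\pa1$ and $\pa3$ forces the global graph form $x_4=f(x_2)$ with $f$ a function of $x_2$ alone. The pleasant feature that makes minimality automatic is that the single null tangent direction $e_1=\pa1$ renders $g_M^{22}=0$, which annihilates the unique surviving component $h_{22}$ irrespective of the choice of $f$.
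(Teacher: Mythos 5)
Your proposal is correct, but it reaches the parametrization by a genuinely different route than the paper. The paper works through the adapted tangent frame $Y_1=\partial_1$, $Y_2=\frac{1}{\beta}\partial_2-\frac{a}{x_3}\partial_4$, $Y_3=\partial_3$: it computes the full ambient connection on this frame, extracts from the symmetry of $h$ the constraints $Y_1(a)=0$ and $Y_3(a)=\frac{a}{x_3}$, observes that $Y_1,Y_2,Y_3$ then commute and hence are coordinate vector fields, and finally integrates the resulting system $\partial_{u_i}F_j$ to recover the immersion (obtaining $a=A(u_2)(u_3+c_3)$ along the way, and absorbing constants by isometries). You instead argue extrinsically and more directly: $g(\xi,\partial_1)=g(\xi,\partial_3)=0$ makes $\partial_1,\partial_3$ tangent, $g(\xi,\partial_4)\neq 0$ makes $\partial_4$ transverse, so $M$ is locally a graph $x_4=\phi(x_1,x_2,x_3)$ whose tangency conditions force $\phi_{x_1}=\phi_{x_3}=0$, i.e.\ $x_4=f(x_2)$; the function $a=-\frac{x_3}{\beta}f'$ is then dictated by orthogonality and automatically satisfies the paper's symmetry constraints, which you never need to invoke. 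Your derivation is shorter and more conceptual (it also explains immediately the paper's subsequent remark interpreting these hypersurfaces as cylinders over the curve $x_4=f(x_2)$), and it avoids integrating a frame; what the paper's longer computation buys is the explicit connection and second-fundamental-form tables \eqref{nablacosthis0}, \eqref{nablaMcosthis0}, \eqref{hcosthis0} in the adapted frame, which are reused verbatim in the proof of Theorem~\ref{main} and in the parallel-hypersurface analysis of Section~4, so that effort is not wasted in the paper's economy. The minimality arguments are essentially identical in substance: both reduce to the observation that the only possibly nonzero component of $h$ is $h_{22}$, while the induced metric, containing the null direction $\partial_1$ paired with the $\partial_2$-direction, has $g_M^{22}=0$, so ${\rm tr}_{g_M}h=g_M^{22}h_{22}=0$ regardless of $f$. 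One cosmetic point: your phrase that graph form is "equivalent" to invariance under the flows of $\partial_1,\partial_3$ is slightly loose --- the local diffeomorphism onto the $(x_1,x_2,x_3)$-coordinates comes from transversality of $\partial_4$, while the tangency of $\partial_1,\partial_3$ is what kills the dependence of $\phi$ on $x_1,x_3$ --- but since you state both facts, the argument stands.
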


\begin{proof}
Since $\xi=a \pa1+\frac{x_3}{\beta}\pa4$ for some function $a:U\rightarrow \R$, the following vector fields span the tangent space to $M$ at each point:
\begin{equation}\label{framecosthis0}
Y_1 = \pa1 ,\qquad Y_2 = { \frac{1}{\beta}\pa2-\frac{a}{x_3}\pa4}, \qquad Y_3 = \pa3.
\end{equation}
{ 
Using \eqref{framecosthis0} and \eqref{nabla}, a direct calculation gives
\begin{align}	\label{nablacosthis0}
\nabla_{Y_1}Y_1=&0, \quad
\nonumber
\nabla_{Y_2}Y_1=\frac{1}{\beta x_3}Y_3, \quad
\nabla_{Y_3}Y_1=-\frac{1}{x_3}Y_1, \\
\nonumber
\nabla_{Y_1}Y_2=&\frac{1}{\beta x_3}Y_3-\frac{\beta}{x_3^2}{Y_1(a)}(\xi-a{Y_1}),\\
\nonumber
\nabla_{Y_3}Y_2=&\left(\frac{H'_3}{2\beta}+\frac{a\beta}{x_3^2}\left(Y_3(a)-\frac{a}{x_3}\right)\right)Y_1-\frac{1}{x_3}Y_2-\frac{\beta}{x_3^2}\left(Y_3(a)-\frac{a}{x_3}\right)\xi,\\
\nabla_{Y_2}Y_2=&\left(\frac{1}{2\beta}\left(\frac{H'_2}{\beta}-\frac{a H'_4}{x_3}\right)+\frac{a\beta}{x_3^2}Y_2(a)\right)Y_1+
\frac{1}{x_3}\left(\frac{H}{\beta^2}-\frac{x_3 H'_3}{2\beta^2}-{a^2}\right)Y_3\\
\nonumber
&-\frac{\beta}{x_3}\left(\frac{H'_4}{2}+\frac{Y_2(a)}{x_3}\right)\xi,\\
\nonumber
  \nabla_{Y_1}Y_3=&-\frac{1}{x_3}Y_1, \quad
\nabla_{Y_2}Y_3=\frac{H'_3}{2\beta}Y_1-\frac{1}{x_3}Y_2, 
\quad
\nabla_{Y_3}Y_3=-\frac{1}{x_3}Y_3.
\end{align}}
From \eqref{nablacosthis0}, using the Gauss formula \eqref{fG} we get that the second fundamental form $h$ is completely determined by
\begin{equation}\label{hcosthis0}
h(Y_2,Y_2)={-\frac{\beta}{x_3}\left(\frac{H'_4}{2}+\frac{Y_2(a)}{x_3}\right)}, \qquad h(Y_i,Y_j)=0 \; \text{for all} \; (i,j)\neq(2,2),
\end{equation}
where we took into account the symmetry condition for $h$, which yields 
$$
Y_1(a)=0, \quad Y_3(a)=\frac{a}{x_3}.
$$
Moreover, the Levi-Civita connection on $M$ is completely determined by
{ 
\begin{align}\label{nablaMcosthis0}
\nabla^M_{Y_1}Y_1=&0, \quad
\nonumber
\nabla^M_{Y_2}Y_1=\frac{1}{\beta x_3}Y_3, \quad
\nabla^M_{Y_3}Y_1=-\frac{1}{x_3}Y_1, \\
\nonumber
\nabla^M_{Y_1}Y_2=&\frac{1}{\beta x_3}Y_3, \quad
\nabla_{Y_3}Y_2=\frac{H'_3}{2\beta}Y_1-\frac{1}{x_3}Y_2,\\
\nabla^M_{Y_2}Y_2=&\left(\frac{1}{2\beta}\left(\frac{H'_2}{\beta}-\frac{a H'_4}{x_3}\right)+\frac{a\beta}{x_3^2}Y_2(a)\right)Y_1+
\frac{1}{x_3}\left(\frac{H}{\beta^2}-\frac{x_3 H'_3}{2\beta^2}-{a^2}\right)Y_3,\\
\nonumber
\nabla^M_{Y_1}Y_3=&-\frac{1}{x_3}Y_1, \quad
\nabla^M_{Y_2}Y_3=\frac{H'_3}{2\beta}Y_1-\frac{1}{x_3}Y_2, 
\quad
\nabla^M_{Y_3}Y_3=-\frac{1}{x_3}Y_3.
\end{align}}
{ 
Thus, the vector fields $Y_1$, $Y_2$ and $Y_3$ are coordinate vector fields on $M$, so  we put
\begin{equation}\label{cooMcosthis0}
    \pa {u_1}=Y_1,\qquad 
    \pa {u_2}=Y_2, \qquad
    \pa {u_3}=Y_3.
\end{equation}}
With respect to the coordinates introduced above, the symmetry conditions for $h$ read 
\begin{equation}\label{conda}
\pa {u_1} a=0, \qquad \pa {u_3}(\ln a)=\frac{1}{x_3}.
\end{equation}

Denote now by $F: M \rightarrow \bar M, \; (u_1, u_2, u_3)\mapsto(F_1(u_1, u_2, u_3), \ldots, F_4(u_1, u_2, u_3))$ the immersion of the hypersurface in the local coordinates introduced above. Using \eqref{framecosthis0} and \eqref{cooMcosthis0} we obtain
\begin{equation} \label{eqpacosthis0}
\begin{array}{ll}
(\pa {u_1} F_1, \pa {u_1} F_2, \pa {u_1} F_3, \pa {u_1} F_4)=& (1,0,0,0),\\[6pt]
(\pa {u_2} F_1, \pa {u_2} F_2, \pa {u_2} F_3, \pa {u_2} F_4)=& (0,\frac{1}{\beta},0,-\frac{a}{F_3}),\\[6pt]
(\pa {u_3} F_1, \pa {u_3} F_2, \pa {u_3} F_3, \pa {u_3} F_4)=& {(0,0,1,0)}.
\end{array}
\end{equation}
Integrating \eqref{eqpacosthis0} we get
$$
\begin{array}{l}
F_1= u_1+c_1,\quad 
F_2= \frac{u_2}{\beta}+c_2,\quad F_3= u_3+c_3,\quad F_4= f(u_2),
\end{array}
$$
for some real constants $c_i, i=1,2,3$ and a function $f$, which depends only on $u_2$ and satisfies $f'(u_2)=-\frac{a}{F_3}=-\frac{a}{u_3+c_3}$. It then follows from \eqref{conda} that $a(u_2,u_3)=A(u_2)(u_3+c_3)$ for an arbitrary function $A$. As we restricted to the case $\cos \theta=0$, from Theorem \ref{t1} we derive the relation $H''_{34}\circ F=0$ and, after a reparametrization, we obtain the immersion given in the statement.

{Finally, by \eqref{hcosthis0}, the second fundamental form of such hypersurfaces is completely determined by the only possibly non-vanishing component
$$
h(Y_2,Y_2)={-\frac{\beta}{x_3}\left(A'(u_2)+\frac{H'_4}{2}\right)}.
$$
Consequently,  for all hypersurfaces of Siklos spacetimes described above we get ${\rm tr}_{g_M} h=g_M^{22}h_{22}=0$ and so, they  are minimal.}
\end{proof}

{
\begin{rem}{\em
A hypersurface as described in Theorem~\ref{costhis0} may be interpreted as a sort of cylinder over the curve with equation $x_4=f(x_2)$ in a surface of the form $x_1=\mathrm{constant}$, $x_3=\mathrm{constant}$.}
\end{rem}}

{We now classify totally geodesic hypersurfaces of Siklos spacetimes.}

\begin{theorem}\label{main}

Let $F: M \rightarrow \bar M$ denote a totally geodesic hypersurface of a Siklos spacetime. Then, there exist local coordinates $(u_1, u_2, u_3)$ on $M$ such that, up to isometries of the ambient space, the immersion is explicitly given by
{ \begin{align*}
F(u_1, u_2, u_3) = (u_1,u_2,u_3,f(u_2)),
\end{align*}}
where $f(u_2)$ satisfies the functional equation $H''_{34}(u_2,u_3,f(u_2))=0$ and the ODE 
$$
2f''(u_2)={H'_4(u_2, u_3, f(u_2))}.
$$
\end{theorem}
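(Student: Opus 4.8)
The plan is to exploit that every totally geodesic hypersurface is parallel, hence has a Codazzi second fundamental form, so that Theorem~\ref{t1} applies: on a suitable neighborhood the unit normal is
$$\xi = a\,\pa1 + \frac{x_3}{\beta}\cos\theta\,\pa3 + \frac{x_3}{\beta}\sin\theta\,\pa4,$$
subject to $(H''_{33}-H''_{44})\sin(2\theta) = 2x_3 H''_{34}\cos(2\theta)$, and $M$ is timelike with $\varepsilon = g(\xi,\xi) = 1$. Following the dichotomy announced before the statement, I would split the argument according to whether $\cos\theta$ vanishes identically.

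The main obstacle is to exclude the case $\cos\theta\neq 0$. Here I would use that, since $M$ is nondegenerate, $h\equiv 0$ is equivalent to $\nabla_X\xi = 0$ for every $X$ tangent to $M$: differentiating $g(\xi,\xi) = \varepsilon$ and $g(\xi,Y) = 0$ gives $g(\nabla_X\xi,\xi) = 0$ and $g(\nabla_X\xi,Y) = -\varepsilon h(X,Y)$, so $h\equiv 0$ forces $\nabla_X\xi$ to be orthogonal to a nondegenerate spanning set, whence $\nabla_X\xi = 0$. Choosing an adapted tangent frame valid when $c = \frac{x_3}{\beta}\cos\theta\neq 0$—for instance $\pa1$, the field $c\,\pa2 - a\,\pa3$ and $X_6 = (\tfrac{x_3}{\beta}\sin\theta)\pa3 - c\,\pa4$—I would impose $\nabla_{\pa1}\xi = 0$ and $\nabla_{X_6}\xi = 0$, expanding everything through the connection coefficients \eqref{nabla}. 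I expect the resulting system, combined with the Codazzi constraint of Theorem~\ref{t1}, to be incompatible when $\cos\theta\neq 0$; equivalently, computing the second fundamental form via the Gauss formula \eqref{fG} exhibits a component that cannot vanish. This computation, heavier than but parallel to the one carried out in Theorem~\ref{costhis0}, is the delicate part of the argument.

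Once $\cos\theta = 0$ is established, Theorem~\ref{costhis0} already yields the immersion $F(u_1,u_2,u_3) = (u_1,u_2,u_3,f(u_2))$ with $H''_{34}(u_2,u_3,f(u_2)) = 0$, and identifies the unique possibly non-vanishing component of the second fundamental form,
$$h(Y_2,Y_2) = -\frac{\beta}{x_3}\left(A'(u_2) + \frac{H'_4}{2}\right),$$
where $a = A(u_2)(u_3+c_3)$ and $f'(u_2) = -a/F_3 = -A(u_2)$. Imposing total geodesy, $h\equiv 0$, is thus equivalent to $A'(u_2) = -\tfrac{1}{2}H'_4$; differentiating $f' = -A$ then gives $f''(u_2) = -A'(u_2) = \tfrac{1}{2}H'_4(u_2,u_3,f(u_2))$, that is $2f''(u_2) = H'_4(u_2,u_3,f(u_2))$. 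Together with the functional equation inherited from Theorem~\ref{costhis0}, this is exactly the asserted characterization, the reparametrization used there absorbing the remaining normalization constants.
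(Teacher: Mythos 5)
Your reduction to Theorem~\ref{t1}, the Weingarten reformulation ($h\equiv 0$ if and only if $\nabla_X\xi=0$ for every tangent $X$), and your whole treatment of the case $\cos\theta=0$ are correct and coincide with the paper's argument, which likewise combines Theorem~\ref{costhis0} with $f'=-A$ and the single surviving component of $h$ to obtain $2f''=H'_4$. The gap is in the step you yourself call delicate, the exclusion of $\cos\theta\neq0$, and it is not merely that you defer the computation (\lq\lq I expect the resulting system \dots to be incompatible\rq\rq): the specific system you propose can never produce the contradiction. You impose only $\nabla_{\pa1}\xi=0$ and $\nabla_{X_6}\xi=0$, i.e.\ you differentiate $\xi$ along the two tangent directions having no $\pa2$-component. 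By \eqref{nabla}, no connection coefficient $\nabla_{\pa i}\pa j$ with $i,j\in\{1,3,4\}$ contains a $\pa2$-term, and $\xi$ itself has no $\pa2$-part; hence for $X\in\{\pa1,X_6\}$ the vector $\nabla_X\xi$ has identically vanishing $\pa2$-component, and your two equations reduce to the first-order constraints
$$
\pa1(\theta)=0,\qquad \pa1(a)=\frac{\cos\theta}{\beta},\qquad X_6(\theta)=-\frac{\cos\theta}{\beta},\qquad X_6(a)=\frac{a\sin\theta}{\beta}.
$$
These are mutually compatible (under them $[\pa1,X_6]=0$ and the mixed $\pa1$--$X_6$ derivatives of $a$ and $\theta$ agree identically), and they do not interact with the algebraic Codazzi relation of Theorem~\ref{t1} in any contradictory way. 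So the incompatibility you hope for is provably invisible in this subsystem.

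The obstruction sits exactly in the tangent direction you list in your frame but never use, $Y_3=\frac{x_3}{\beta}\cos\theta\,\pa2-a\,\pa3$, the only one with a $\pa2$-component. Since $\nabla_{\pa2}\pa1=\frac{1}{x_3}\pa3$ and $\nabla_{\pa3}\pa1=-\frac{1}{x_3}\pa1$, one gets, with no derivatives of $a$ or $\theta$ entering at all,
$$
h(Y_3,\pa1)=g\bigl(\nabla_{Y_3}\pa1,\xi\bigr)=g\Bigl(\tfrac{\cos\theta}{\beta}\,\pa3+\tfrac{a}{x_3}\,\pa1,\;\xi\Bigr)=\frac{\cos\theta}{\beta}\cdot\frac{\beta\cos\theta}{x_3}=\frac{\cos^2\theta}{x_3}\neq 0
$$
whenever $\cos\theta\neq0$ (here $\varepsilon=1$ because $M$ is timelike by Theorem~\ref{t1}); equivalently, $\nabla_{Y_3}\xi$ has $\pa2$-component $-\frac{x_3\cos^2\theta}{\beta^2}\neq0$. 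This one-line identity is precisely how the paper rules out this case: it computes all of $h$ in the frame $\{Y_1,Y_2,Y_3\}$ and observes $h(Y_3,Y_1)=\frac{\cos^2\theta}{x_3}$, so no totally geodesic hypersurface exists with $\cos\theta\neq0$. If you add $\nabla_{Y_3}\xi=0$ to your system (or replace your two equations by this single computation), the argument closes and the rest of your proof agrees with the paper's; as written, however, the crucial case is not excluded.
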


{\begin{proof}
We start from the description given in 
Theorem~\ref{t1} of hypersurfaces with $h$ Codazzi and consider first the case where $\cos\theta=0$. 

\medskip
\textbf{Case 1: $\boldsymbol{\cos\theta}\mathbf{=0}$.}
Hence, the hypersurfaces are as described in Theorem~\ref{costhis0} and we have the tangent coordinate vector fields explicitly given by
$$
F_{u_1}=\pa 1, \quad F_{u_2}=\pa 2+f'(u_2)\pa 4, \quad F_{u_3}=\pa 3
$$
and the unit normal vector field on $M$ expressed by
$$
\xi=f'(u_2)\pa 1-\frac{\beta}{x_3}\pa 4.
$$
By \eqref{nablahis0} and \eqref{nablacosthis0} it follows that the only components of the Levi-Civita connection wich take part in the condition for parallelism are the following:
\begin{align}\label{nablaF}
\nabla_{F_{u_2}}F_{u_2}&=\left(\frac{H'_2}{2}+f'\frac{H'_4}{2}+f'f''\right) F_{u_1}+\left(\frac{H}{x_3}-\frac{H'_3}{2}+\frac{(f')^2}{x_3}\right)F_{u_3}-\frac{\beta}{x_3}\left(f''-\frac{H'_4}{2}\right)\xi,\\
\nonumber
\nabla_{F_{u_2}}F_{u_3}&=\nabla^M_{F_{u_3}}F_{u_2}=\frac{H'_3}{2}F_{u_1}-\frac{1}{x_3}F_{u_2}.
\end{align}
Using \eqref{nablaF}, the condition $H''_{34}=0$ and the fact that $f$ depends only on $u_2$, a direct calculation yields that the immersion is parallel if and only if
$$
\pa {u_2}\left(f''-\frac{H'_4}{2}\right)=0, \qquad \frac{\beta}{x_3}\left(f''-\frac{H'_4}{2}\right)=0.
$$
On the other hand,
$$
h(F_{u_2},F_{u_2})=-\frac{\beta}{x_3}\left(f''-\frac{H'_4}{2}\right)=0
$$
and so, the immersion is parallel if and only if it is totally geodesic. Thus, for $\cos\theta=0$ there are not examples of proper parallel hypersurfaces, and the totally geodesic ones are completely determined by condition $f''=\frac{H'_4}{2}$. 

\medskip
\textbf{Case 2: $\boldsymbol{\cos\theta}\mathbf{\neq 0}$.}
Let now $F: M \rightarrow \bar M$ denote a hypersurface as in Theorem~\ref{t1} with $\cos \theta \neq 0$. Then, $\xi=a\pa1+\frac{x_3}{\beta} \cos \theta \pa 3 +\frac{x_3}{\beta} \sin \theta \pa 4$ and vector fields
\begin{equation}\label{frameI}
Y_1 = \pa1,\quad Y_2 = \frac{x_3}{\beta} \sin \theta \pa 3 -\frac{x_3}{\beta}\cos \theta \pa4, \quad Y_3 = \frac{x_3}{\beta}\cos \theta \pa2-a\pa3
\end{equation}
span the tangent space to $M$ at every point.

Using \eqref{frameI}, \eqref{nabla}and the Gauss formula, a long but straightforward calculation yields
\begin{align*}
\nabla^M_{Y_1}Y_1=& 0, \quad \nabla^M_{Y_2}Y_1=-\frac{\sin \theta}{\beta}Y_1, \quad \nabla^M_{Y_3}Y_1=\sin^2\theta \frac{a}{x_3} Y_1+\frac{\sin 2\theta}{2x_3}Y_2, \\
\nabla^M_{Y_1}Y_2=&-\left( aY_1(\theta)+\frac{\sin \theta}{\beta}\right)Y_1, \quad \nabla^M_{Y_2}Y_2=-a\left(Y_2(\theta)+\frac{\cos\theta}{\beta}\right)Y_1,\\
\nabla^M_{Y_3}Y_2=&\left(a^2\frac{\sin 2\theta}{2x_3}-aY_3(\theta)+\frac{x_3^2}{\beta^2}\frac{\cos\theta}{2}(\sin\theta H'_3-\cos\theta H'_4)\right)Y_1-a\frac{\sin^2\theta}{x_3} Y_2-\frac{\sin\theta}{\beta}Y_3,\\
\nabla^M_{Y_1}Y_3=&\frac{a}{x_3}\left(\sin^2\theta+\cos^2\theta \beta Y_1\left(\frac{a}{\cos\theta}\right)\right)Y_1+\frac{\sin 2\theta}{2 x_3}\left(1-\beta Y_1\left(\frac{a}{\cos\theta}\right)\right)Y_2+\frac{Y_1(\cos\theta)}{\cos\theta}Y_3,\\
\nabla^M_{Y_2}Y_3=&\left(\frac{x_3^2}{\beta^2}\frac{\cos\theta}{2}(\sin\theta H'_3-\cos\theta H'_4)+a\frac{\beta}{x_3}\cos^2\theta Y_2\left(\frac{a}{\cos\theta}\right)\right)Y_1\\
&+\left(\frac{a}{x_3}-\frac{\beta}{x_3}\frac{\sin2 \theta}{2} Y_2\left(\frac{a}{\cos\theta}\right)\right) Y_2-\frac{Y_2(\cos\theta)}{\cos\theta}Y_3,\\
\nabla^M_{Y_3}Y_3=& \Bigl(\frac{x_3^2}{\beta^2}\frac{\cos^2\theta}{2}H'_2-a\frac{x_3}{\beta}\frac{\cos\theta}{2}(2-\cos^2\theta)H'_3+a\frac{\beta}{x_3}\cos^2\theta Y_3\left(\frac{a}{\cos\theta}\right)\\
&+a\frac{x_3}{\beta}\frac{\cos^2\theta}{2}\sin\theta H'_4-a\frac{\cos^3\theta}{\beta}H \Bigr)Y_1+\Bigl(-\frac{x_3}{\beta}\frac{\cos^2\theta}{2}(\sin\theta H'_3-\cos\theta H'_4)\\
&+\cos^2\theta \sin\theta \frac{H}{\beta}-\frac{\beta}{x_3}\frac{\sin 2\theta}{2}Y_3\left(\frac{a}{\cos\theta}\right)\Bigr)Y_2+\left(\frac{a}{x_3}+\frac{Y_3(\cos\theta)}{\cos\theta}\right)Y_3
\end{align*}
and the second fundamental form is determined by

$$
\begin{array}{lll}
h(Y_1,Y_1)=0,
&\quad
h(Y_1,Y_2)=Y_1(\theta),
\\[6pt]
h(Y_1,Y_3)=\frac{\cos^2\theta}{x_3}\left(1-\beta Y_1\left(\frac{a}{\cos\theta}\right)\right),
&\quad
h(Y_2,Y_1)=0,
\\[6pt]
h(Y_2,Y_2)=Y_2(\theta)+\frac{\cos\theta}{\beta},
&\quad 
h(Y_2,Y_3)=-\frac{\beta}{x_3}\cos^2 \theta Y_2\left(\frac{a}{\cos\theta}\right),
\\[6pt]
h(Y_3,Y_1)=\frac{\cos^2\theta}{x_3},
&\quad
h(Y_3,Y_2)=Y_3(\theta)-a\frac{\sin\theta\cos\theta}{x_3},
\end{array}
$$
$$
h(Y_3,Y_3)=\cos^3\theta\frac{H}{\beta}-\frac{x_3}{\beta}\frac{\cos^2\theta}{2}(\cos\theta H'_3+\sin\theta H'_4)-\frac{\beta}{x_3}\cos^2\theta Y_3\left(\frac{a}{\cos\theta}\right).
$$
Next, the symmetry of $h$ yields
$$
Y_1(\theta)=0, \qquad Y_1(a)=0,\qquad  Y_3(\theta)=a\frac{\sin\theta\cos\theta}{x_3}-\frac{\beta}{x_3}\cos^2 \theta Y_2\left(\frac{a}{\cos\theta}\right).
$$
Observe that as $h(Y_1,Y_3)=h(Y_3,Y_1)=\frac{\cos^2\theta}{x_3}$, the hypersurface $M$ cannot be totally geodesic for $\cos\theta \neq 0$. 
\end{proof}

{\begin{rem}{\em 
As we proved in the above Theorem \ref{main}, the condition $\cos \theta =0$ is necessary in order to get totally geodesic hypersurfaces. Consequently, by Theorem~\ref{t1} we have $H''_{34}\circ F=0$. Although it is not possible to decide whether a totally geodesic hypersurface exists for a general defining function $H$, we shall prove that there are some classes of Siklos metrics, defined by some particular functions $H$, for which we can give explicit immersions of totally geodesic hypersurfaces. 
}\end{rem}}

We may observe that the condition we found for the defining condition $H$ in Theorems~\ref{costhis0} and~\ref{main} naturally occurs in the study of geometric properties of Siklos metrics \eqref{Smet}. In fact, $H''_{34}=0$ is satisfied by all (locally) conformally flat Siklos metrics (see for example \cite{CcflatS}).
The restriction we found leaves us with a very large class of spacetimes, not necessarily homogeneous. In fact,  $H''_{34}=0$ corresponds to requiring that
$$
{
H=L(x_2,x_3)+G(x_2,x_4)},
$$
for two arbitrary functions ${L},G$ of two variables \cite{ClargeS}. By Proposition~\ref{costhis0}, all such Siklos spacetimes admit minimal hypersurfaces with a Codazzi second fundamental form.

{
In addition, the condition for totally geodesic hypersurfaces given in Theorem~\ref{main} now reads:
$$
f''(u_2)=\tilde{G}(u_2),
$$
where $\tilde{G}(u_2)=(G'_4\circ F)(u_2)=\left(\dfrac{\partial}{\partial x_4}(G)\circ F\right)(u_2)$.

Moreover, we observe that the special subclass determined by condition $H'_4=0$, {identifies in Siklos paper \cite{Si}} the class of Siklos metrics which, besides $\partial_1$ (which is a Killing vector field for all Siklos metrics), admits at least two further linearly independent Killing vector fields, namely, $\partial_4$ and $x_4 \partial_1 -x_2\partial_4$. The condition $H'_{4}=0$ is clearly equivalent to requiring that 
$$
H=H(x_2,x_3).
$$
For any of such defining functions, the corresponding Siklos spacetime admits {totally geodesic} hypersurfaces as described in Theorem~\ref{main}. In particular, for the totally geodesic case, the function $f$ given therein is a polynomial function of degree at most $1$ of $u_2$. More explicitly, the following result gives the expression for such hypersurfaces in the case of $H'_4=0$.

\begin{cor}\label{H'_4=0}
Let $(\bar M,g)$ denote an arbitrary Siklos spacetime, with the Lorentzian metric $g$ described by \eqref{Smet} with respect to global coordinates $(x_1, x_2, x_3, x_4)$ and depending on an arbitrary defining smooth function $H=H(x_2,x_3)$. Then, $(\bar M,g)$ always admits totally geodesic hypersurfaces which, up to isometries of the ambient space, are  explicitly described by
$F: M \rightarrow (\bar M,g)$ with
\begin{align}
F(u_1,u_2,u_3) = (u_1,u_2,u_3,\lambda u_2+\mu), \label{immparH'_4=0}
\end{align}
where $\lambda$ and $\mu$ are real constants. {Therefore, the image of $M$ is a hyperplane of the ambient space 
$(\bar M,g)$, of equation $x_4=\lambda x_2 + \mu$.}
\end{cor}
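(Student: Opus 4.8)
The plan is to obtain this corollary as a direct specialization of Theorem~\ref{main} to the case $H=H(x_2,x_3)$. First I would observe that if the defining function does not depend on $x_4$, then $H'_4=\partial_{x_4}H=0$ identically, and consequently $H''_{34}=\partial^2_{x_3x_4}H=0$ identically as well. In particular, the functional equation $H''_{34}(u_2,u_3,f(u_2))=0$ required in Theorem~\ref{main} is automatically satisfied, whatever the function $f$ may be, so this constraint imposes no restriction in the present situation.

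Next I would turn to the ordinary differential equation $2f''(u_2)=H'_4(u_2,u_3,f(u_2))$ appearing in Theorem~\ref{main}. Since $H'_4\equiv 0$, it reduces to $f''(u_2)=0$, whose general solution is the affine function $f(u_2)=\lambda u_2+\mu$ for arbitrary real constants $\lambda,\mu$. This already settles existence: for every choice of $\lambda$ and $\mu$ both conditions of Theorem~\ref{main} hold, so the corresponding immersion $F(u_1,u_2,u_3)=(u_1,u_2,u_3,\lambda u_2+\mu)$ is totally geodesic. Hence every Siklos spacetime with $H=H(x_2,x_3)$ admits totally geodesic hypersurfaces of the stated form, as claimed.

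Finally, to identify the image as a hyperplane, I would simply read off the components of $F$: since $F_1=u_1$, $F_2=u_2$, $F_3=u_3$ and $F_4=\lambda u_2+\mu=\lambda F_2+\mu$, the coordinates $(x_1,x_2,x_3,x_4)$ of a point of $M$ satisfy the single relation $x_4=\lambda x_2+\mu$, so that $F(M)$ is precisely the affine hyperplane of this equation. I do not expect any genuine obstacle, since the whole statement is a clean specialization of Theorem~\ref{main}; the only point deserving care is to record that $H''_{34}$ vanishes identically (not merely along $M$) as soon as $H$ is independent of $x_4$, so that the functional equation holds for free and $f$ is constrained only by the ODE, which forces $f$ to be of degree at most one in $u_2$.
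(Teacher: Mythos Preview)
Your proposal is correct and follows essentially the same reasoning as the paper: the corollary is stated immediately after the discussion noting that $H'_4=0$ makes $H''_{34}\equiv 0$ and reduces the ODE of Theorem~\ref{main} to $f''=0$, yielding $f(u_2)=\lambda u_2+\mu$. The paper does not give a separate formal proof, and your write-up simply makes these observations explicit.
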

}}

\section{Proper parallel Hypersurfaces}
\setcounter{equation}{0}

Let $F: M \rightarrow \bar M$ denote again a hypersurface of a Sikos spacetime.  We now require that the immersion of $M$ is parallel but not totally geodesic. As we proved in the previous Section, this implies that $M$ is as described in 
Theorem~\ref{t1} with $\cos \theta \neq 0$, and we already derived in such a case the equations for $\nabla^M$ 
and $h$. 

By these equations, we now find 
\begin{align}
\label{p1} 0&=(\nabla^M _{Y_2} h) (Y_1,Y_3)= -\frac{\sin\theta \cos\theta}{x_3}Y_2(\theta),\\[7pt]
\label{p2} 0&=(\nabla^M _{Y_2} h) (Y_3,Y_2)=Y_2(Y_3(\theta))=\frac{\beta}{x_3}\sin\theta Y_3(\theta),\\[7pt]
\label{p3} 0&=(\nabla^M _{Y_3} h) (Y_3,Y_2)=-\frac{\sin\theta \cos\theta}{x_3}\left(\frac{a^2}{x_3}+Y_3(a)+\frac{x_3^2}{\beta^2}\frac{\cos\theta}{2}(\cos\theta H'_3+\sin\theta H'_4)\right).
\end{align}
Equations \eqref{p1} and \eqref{p2}, together with the symmetry condition $Y_1(\theta)=0$, imply that  $\theta$ is necessarily constant. {Therefore, from Theorem \ref{t1} we have at once in full generality the following necessary condition for the existence of parallel hypersurfaces.}

\begin{prop}\label{mainp}
Let $(\bar M,g)$ denote an arbitrary Siklos spacetime, with the Lorentzian metric $g$ described by \eqref{Smet} with respect to global coordinates $(x_1, x_2, x_3, x_4)$ and depending on an arbitrary defining smooth function $H=H(x_2,x_3,x_4)$. {If $(\bar M,g)$ admits parallel hypersurfaces, then there exists a real} constant $\theta$, such that the defining function $H=H(x_2,x_3,x_4)$ satisfies {the partial differential equation}
\begin{equation}\label{cpara}
{  \sin (2\theta) (H''_{33}-H''_{44})=2\cos (2\theta)H''_{34}}
\end{equation}
along the hypersurface.
\end{prop}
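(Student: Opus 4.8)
The plan is to assemble Proposition \ref{mainp} directly from the machinery already developed in Section~3 and the three parallelism equations \eqref{p1}--\eqref{p3} displayed immediately before the statement. The logical skeleton is: any parallel hypersurface is in particular Codazzi, hence falls under Theorem~\ref{t1}; the case $\cos\theta = 0$ yields only totally geodesic examples (by the Case~1 analysis in the proof of Theorem~\ref{main}), which are a trivial instance of parallel; so the genuinely new content is the case $\cos\theta \neq 0$, where I must show that parallelism forces $\theta$ to be constant, and then feed this constant $\theta$ back into the Codazzi condition from Theorem~\ref{t1} to obtain \eqref{cpara}.

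First I would invoke Theorem~\ref{t1}: since a parallel hypersurface has $\nabla^M h = 0$, it certainly has a Codazzi second fundamental form, so on a neighborhood of each point the unit normal takes the stated form $\xi = a\,\pa1 + \frac{x_3}{\beta}\cos\theta\,\pa3 + \frac{x_3}{\beta}\sin\theta\,\pa4$ with the pointwise relation $(H''_{33}-H''_{44})\sin(2\theta) = 2x_3 H''_{34}\cos(2\theta)$ holding along $M$. The key step is then to establish that $\theta$ is a genuine constant, not merely a function. For this I would read off equations \eqref{p1} and \eqref{p2} in the frame \eqref{frameI}: equation \eqref{p1} gives $\frac{\sin\theta\cos\theta}{x_3}Y_2(\theta) = 0$ and equation \eqref{p2} gives $\frac{\beta}{x_3}\sin\theta\,Y_3(\theta) = 0$, while the symmetry condition for $h$ already recorded in the proof of Theorem~\ref{main} supplies $Y_1(\theta) = 0$. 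Since $\cos\theta\neq 0$ in this case, and since the frame $\{Y_1,Y_2,Y_3\}$ spans $TM$, one concludes $Y_1(\theta) = Y_2(\theta) = Y_3(\theta) = 0$, so that $\theta$ is locally constant on $M$.

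With $\theta$ now a constant, the relation supplied by Theorem~\ref{t1} becomes exactly the partial differential equation \eqref{cpara}: substituting the constant value of $\theta$ into $(H''_{33}-H''_{44})\sin(2\theta) = 2x_3 H''_{34}\cos(2\theta)$ and recalling that the defining function $H$ is evaluated along $F$ (as per the Remark following Theorem~\ref{t1}), one reads the identity as a constraint on $H$ holding along the hypersurface, with a fixed real constant $\theta$. This is precisely the assertion of the Proposition.

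The only delicate point — and the step I would treat with the most care rather than as routine — is the extraction of $Y_2(\theta)$ and $Y_3(\theta)$ being forced to vanish when $\sin\theta$ or $\sin\theta\cos\theta$ could a priori vanish. If $\sin\theta \equiv 0$ on some open set, then $\theta$ is already locally constant there and nothing more is needed; the subtle case is a set where $\sin\theta$ vanishes only on a lower-dimensional locus. I expect this to be handled by a continuity argument: $\theta$ is constant on the open dense set where $\sin\theta\cos\theta \neq 0$, and hence constant on the whole connected neighborhood by continuity. Once local constancy of $\theta$ is secured, the passage to \eqref{cpara} is immediate from Theorem~\ref{t1}, so the entire weight of the argument rests on this vanishing-derivative deduction from \eqref{p1}--\eqref{p2}.
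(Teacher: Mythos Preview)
Your proposal is correct and follows essentially the same route as the paper: reduce to Theorem~\ref{t1} via the Codazzi property, dispose of the $\cos\theta=0$ branch using the analysis in the proof of Theorem~\ref{main}, and in the $\cos\theta\neq 0$ branch use \eqref{p1}, \eqref{p2} and the symmetry condition $Y_1(\theta)=0$ to force $\theta$ constant, whence \eqref{cpara} is just the relation from Theorem~\ref{t1}. Your continuity argument for the locus where $\sin\theta$ vanishes is a welcome refinement that the paper leaves implicit; note also that the factor $x_3$ you carried from the statement of Theorem~\ref{t1} is a typo there (the proof of Theorem~\ref{t1} itself gives the correct form matching \eqref{cpara}).
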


We may observe that condition \eqref{cpara} generalizes (locally) conformally flat Siklos metrics, completely characterized by  $H''_{33}-H''_{44}=H''_{34}=0$ \cite{CcflatS}.

In the previous Section we derived all formulas which permit to express parallelism of a  hypersurface with a Codazzi second fundamental form in  an arbitrary Siklos spacetime. However, the condition for parallelism leads in some cases to partial differential equations which are extremely difficult to treat. This should not come as a surprise, taking into account that here we are not just dealing with an example of ambient space, but with the whole class of Siklos metrics, so that the defining function $H=H(x_2,x_3,x_4)$ plays a crucial role when expressing parallelism.

In the remaining part of this Section, we first give a complete description of some classes of proper parallel hypersurfaces in Sikos spacetimes. We then specify the defining function $H$ to correspond to some well known homogeneous spacetimes and in this case we achieve a complete description of their parallel hypersurfaces.

We start with any Siklos metric, as described by an arbitrary defining function $H=H(x_2,x_3,x_4)$. A proper parallel hypersurface, having $h$ Codazzi, is described as in Theorem  \ref{t1}. We restrict here to the case where $a=0$.

\begin{theorem}\label{42}
Let $F: M \rightarrow \bar M$ be a proper parallel hypersurface of a Siklos homogeneous spacetime. Assume that $a=0$ in the description obtained in Theorem~{\em\ref{t1}}. Then, there exist local coordinates $(u_1, u_2, u_3)$ on $M$, such that up to isometries of the ambient space, the immersion is given by  one of the following expressions:

\smallskip
$\bullet$ either there exists a real constant $\theta$, such that the defining function $H=H(x_2,x_3,x_4)$ satisfies {the partial differential equation} $H'_3=-\tan\theta H'_4$ along the hypersurface and
\begin{itemize}
\vspace{5pt}\item[1.] $F(u_1, u_2, u_3)=\left(u_1,u_3,\rho e^{u_2},-\cot \theta \rho e^{u_2}+ C\right)
$ for some real constants $C$, $\theta$ and $\rho \neq 0$, or
\end{itemize}

\smallskip
$\bullet$   $H''_{32}=H''_{34}=0$ along the hypersurface and
\begin{itemize}
\vspace{5pt}\item[2.] $F(u_1, u_2, u_3)=(u_1,u_3,C,u_2)$, where $C\neq 0$ is a real constant.
\end{itemize}
{Moreover, both classes of parallel hypersurfaces are of constant mean curvature (CMC).}
\end{theorem}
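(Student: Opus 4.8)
The plan is to continue from the point reached just before Proposition~\ref{mainp}, where a proper parallel hypersurface is already known to be of the type in Theorem~\ref{t1} with $\cos\theta\neq0$, and where equations~\eqref{p1}--\eqref{p2} together with $Y_1(\theta)=0$ have forced $\theta$ to be constant. Adding the standing hypothesis $a=0$, I would first substitute $a=0$ and $\theta=\text{const}$ into the expressions for $h$ and $\nabla^M$ obtained in the proof of Theorem~\ref{main} (Case~2). This collapses the second fundamental form to $h(Y_1,Y_3)=h(Y_3,Y_1)=\cos^2\theta/x_3$, $h(Y_2,Y_2)=\cos\theta/\beta$ and $h(Y_3,Y_3)=\cos^3\theta\,H/\beta-(x_3\cos^2\theta/2\beta)(\cos\theta H'_3+\sin\theta H'_4)$, with all other components zero, and simplifies $\nabla^M$ accordingly. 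Recording these reduced formulas is the bookkeeping on which the rest depends.

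Next I would impose full parallelism $\nabla^M h=0$ and read off the constraints on $H$. With $a=0$, equation~\eqref{p3} becomes $\sin\theta\cos^2\theta(\cos\theta H'_3+\sin\theta H'_4)=0$ along $M$, so, as $\cos\theta\neq0$, the argument splits into the branch $\sin\theta\neq0$, where $\cos\theta H'_3+\sin\theta H'_4=0$, i.e. $H'_3=-\tan\theta H'_4$, and the branch $\sin\theta=0$, which up to an ambient isometry I normalize to $\theta=0$. In the latter branch the necessary condition~\eqref{cpara}, which for $\theta=0$ reads $H''_{34}=0$, already fixes one constraint, whereas the component $(\nabla^M h)(Y_3,Y_3,Y_3)=0$, which evaluates to a nonzero multiple of $\cos\theta H''_{23}+\sin\theta H''_{24}$, then forces $H''_{32}=0$; in the first branch this same component vanishes automatically, since $\cos\theta H'_3+\sin\theta H'_4$ is zero along $M$ and $\partial_2$ is tangent. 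The remaining components of $\nabla^M h$ must still be checked to vanish identically, repeatedly using~\eqref{cpara} to kill the second-order terms; this is routine but error-prone.

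Then I would integrate the immersion in each branch. With $a=0$ and $\theta$ constant the normal is $\xi=(x_3/\beta)(\cos\theta\,\partial_3+\sin\theta\,\partial_4)$, whose metric dual is proportional to $d(\cos\theta\,x_3+\sin\theta\,x_4)$, so $M$ is locally a level set $\{\cos\theta\,x_3+\sin\theta\,x_4=\text{const}\}$; in particular $\partial_1$ and $\partial_2$ are tangent, since $g(\partial_2,\xi)=0$. Taking $\partial_{u_1}=\partial_1$ and $\partial_{u_3}=\partial_2$ gives $F_1=u_1$, $F_2=u_3$ up to constants, while the level-set equation gives $F_4=-\cot\theta\,F_3+C$ when $\sin\theta\neq0$ and $F_3=\text{const}$ when $\theta=0$. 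The frame~\eqref{frameI} satisfies $[Y_2,Y_3]=(\sin\theta/\beta)Y_3$, so it is non-holonomic for $\sin\theta\neq0$; in that case I would fix $u_2$ by requiring $\partial_{u_2}$ to be a constant multiple of $Y_2=(x_3/\beta)(\sin\theta\,\partial_3-\cos\theta\,\partial_4)$, which forces $dF_3/du_2=(\sin\theta/\beta)F_3$, hence $F_3=\rho\,e^{u_2}$ after rescaling $u_2$, giving immersion~1. When $\theta=0$ the frame is holonomic and the integration of Theorem~\ref{costhis0} applies, $F_3$ being constant, giving immersion~2 with $C\neq0$ because~\eqref{Smet} requires $x_3\neq0$.

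Finally, the CMC assertion follows by a direct trace. In the frame $\{Y_i\}$ the induced metric has nonzero entries $g_M(Y_1,Y_3)=\beta\cos\theta/x_3$, $g_M(Y_2,Y_2)=1$ and $g_M(Y_3,Y_3)=\cos^2\theta\,H$, with determinant $-\beta^2\cos^2\theta/x_3^2\neq0$; its inverse satisfies $g_M^{22}=1$, $g_M^{13}=x_3/(\beta\cos\theta)$, $g_M^{11}=-x_3^2H/\beta^2$ and $g_M^{33}=0$. Since $g_M^{33}=0$ annihilates the only $H$-dependent entry $h(Y_3,Y_3)$, and $h(Y_1,Y_1)=0$, the contraction reduces to $\sum g_M^{ij}h_{ij}=g_M^{22}h(Y_2,Y_2)+2g_M^{13}h(Y_1,Y_3)=\cos\theta/\beta+2\cos\theta/\beta$, so that $\mathrm{tr}_{g_M}h=\cos\theta/\beta$. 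As $\theta$ is constant this is a constant, hence both classes are CMC (and never minimal, as $\cos\theta\neq0$). The main obstacle I anticipate is twofold: carrying out the full $\nabla^M h=0$ bookkeeping to be sure that no constraint on $H$ beyond those stated survives, and, in the $\sin\theta\neq0$ branch, producing the stated normal form—because the frame is non-holonomic one cannot promote the $Y_i$ to coordinate fields as in Theorem~\ref{costhis0}, so one must choose the coordinate that turns $F_3$ into an exponential and then invoke the ambient isometries to absorb $\theta$, $\rho$, $C$ and the remaining constants.
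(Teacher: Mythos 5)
Your proposal is correct, and it reaches exactly the constraints and normal forms of the theorem; the constraint-extraction and the CMC trace coincide with the paper's own argument, but your integration step takes a genuinely different (and cleaner) route. Like the paper, you reduce $h$ and $\nabla^M$ for $a=0$ and $\theta$ constant, split via \eqref{p3} into the branches $\sin\theta\neq 0$ (forcing $H'_3=-\tan\theta H'_4$) and $\sin\theta=0$ (where \eqref{cpara} gives $H''_{34}=0$ and the component $(\nabla^M_{Y_3}h)(Y_3,Y_3)$ gives $H''_{32}=0$ --- this is precisely the component the paper computes), and you obtain the same value ${\rm tr}_{g_M}h=\cos\theta/\beta$. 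Where you diverge is the parametrization: the paper makes the ansatz $\partial_{u_3}=\alpha Y_2+\gamma Y_3$, solves $[\partial_{u_2},\partial_{u_3}]=0$ for $\alpha,\gamma$, and integrates the full first-order systems \eqref{eqpaA12} and \eqref{eqpaA22} for $F$, whereas you observe that $g(\xi,\cdot)=\frac{\beta}{x_3}\,\di\!\left(\cos\theta\,x_3+\sin\theta\,x_4\right)$ is proportional to an exact one-form, so $M$ is an open piece of the level set $\cos\theta\,x_3+\sin\theta\,x_4=\mathrm{const}$, after which only the single ODE $\partial_{u_2}F_3=\frac{\sin\theta}{\beta}F_3$ remains. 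Your route is shorter and makes the geometric meaning of cases 1.\ and 2.\ transparent; the paper's route has the advantage of being the same machinery it reuses verbatim in the $a\neq 0$ cases of its final classification theorem. Two minor remarks: your consistency check in branch 1 --- that $(\nabla^M_{Y_3}h)(Y_3,Y_3)$ is a multiple of $\cos\theta H''_{23}+\sin\theta H''_{24}$, which vanishes automatically since $\partial_2$ is tangent and $\cos\theta H'_3+\sin\theta H'_4=0$ along $M$ --- is correct and is a point the paper does not record; and your appeal to ``the integration of Theorem~\ref{costhis0}'' in the $\sin\theta=0$ branch is a slight misnomer (that theorem concerns $\cos\theta=0$), though what you actually use, direct integration of a commuting frame, is sound. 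Finally, note that neither you nor the paper verifies that \emph{all} components of $\nabla^M h$ vanish on the resulting immersions; this would only be needed for sufficiency, not for the necessity statement as written, and you at least flag it explicitly.
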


\begin{proof}
{Observe that by Theorem~\ref{main} we have $\cos\theta \neq 0$, as we are looking for proper parallel hypersurfaces. Thus, by  equation~\eqref{p3} it follows that either $\sin \theta =0$ or 
$\cos\theta H'_3+\sin\theta H'_4=0$ along the hypersurface, that is,
$$
H'_3=-\tan\theta H'_4.
$$
We will treat the case $\sin\theta=0$ later. We now} look for a system of local coordinates $(u_1, u_2, u_3)$ on $M$, such that
\begin{equation}\label{cooMa12}
    \pa {u_1}=Y_1,\qquad 
    \pa {u_2}=Y_2, \qquad
    \pa {u_3}=\alpha Y_2+ \gamma Y_3,
\end{equation}
for some smooth functions $\alpha, \gamma$ on  $M$. Requiring that $[ \pa {u_2},\pa {u_3}]=0$, we get
\begin{align*}
\begin{cases}
   Y_2(\alpha)=0,\\[5pt]
   Y_2(\gamma)=-\gamma\frac{\sin\theta}{\beta} ,
\end{cases}
\end{align*}
which admits as a solution 
$$
\alpha=0, \quad \gamma=e^{-\frac{\sin\theta}{\beta}u_2}.
$$

Denote now by $F: M \rightarrow \bar M, \; (u_1, u_2, u_3)\mapsto(F_1(u_1,u_2,u_3), \ldots, F_4(u_1,u_2,u_3))$ the immersion of the hypersurface in the local coordinates introduced above. By~\eqref{frameI}, we obtain
\begin{equation} \label{eqpaA12}
\begin{array}{ll}
(\pa {u_1} F_1, \pa {u_1} F_2, \pa {u_1} F_3, \pa {u_1} F_4)=& (1,0,0,0),\\[6pt]
(\pa {u_2} F_1, \pa {u_2} F_2, \pa {u_2} F_3, \pa {u_2} F_4)=& \frac{F_3}{\beta}{(0,0,\sin\theta,-\cos\theta)},\\[6pt]
(\pa {u_3} F_1, \pa {u_3} F_2, \pa {u_3} F_3, \pa {u_3} F_4)=& e^{-\frac{\sin\theta}{\beta}u_2}\frac{F_3}{\beta}(0,{\cos\theta},0,0).
\end{array}
\end{equation}
Integrating \eqref{eqpaA12} we find
$$
 F_1= u_1+c_1,\quad F_2= c_3\frac{\cos\theta}{\beta}u_3+c_2,\quad F_3=c_3e^{\frac{\sin\theta}{\beta}u_2},\quad F_4= -c_3\frac{\cot\theta}{\beta}e^{\frac{\sin\theta}{\beta}u_2}+ c_4,
$$
for some real constants $c_1$, $c_2$, $c_3$ and $c_4$. 
After a reparametrization and applying isometries of the ambient space one obtains the following description in local coordinates of these proper parallel hypersurfaces:
$$
F(u_1, u_2, u_3)=\left(u_1,u_3,\rho e^{u_2},-\cot \theta \rho e^{u_2}+ C\right)
$$
for some real constants $C$, $\theta$ and $\rho\neq 0$.

\medskip
{ We now consider the case $\sin\theta=0$.} Then, from Theorem~\ref{mainp} we deduce $H''_{34}=0$. Moreover, in this case vector fields
$$
\pa {u_1}=Y_1 = \pa1,\qquad \pa {u_2}=Y_2 = -\frac{x_3}{\beta} \pa4, \qquad \pa {u_3}=Y_3 = \frac{x_3}{\beta}\pa2
$$
are coordinates vector fields on $M$ and the only non-vanishing components of second fundamental form are
$$
h(Y_1,Y_3)=\frac{1}{x_3},\qquad
h(Y_2,Y_2)=\frac{1}{\beta},\qquad
h(Y_3,Y_3)=\frac{H}{\beta}-\frac{x_3}{\beta}\frac{H'_3}{2}.
$$
It is easy to see that, in this case, $M$ is flat. Requiring the immersion of $M$ to be parallel, we obtain
$$
0=(\nabla^M _{Y_3} h) (Y_3,Y_3)= -\frac{x_3^2}{\beta^2}\frac{H''_{32}}{2},
$$
that is $H''_{32}=0$.

Denote now by $F: M \rightarrow \bar M, \; (u_1, u_2, u_3)\mapsto(F_1(u_1,u_2,u_3), \ldots, F_4(u_1,u_2,u_3))$ the immersion of the hypersurface in the local coordinates introduced above. Then, we obtain
\begin{equation} \label{eqpaA22}
\begin{array}{ll}
(\pa {u_1} F_1, \pa {u_1} F_2, \pa {u_1} F_3, \pa {u_1} F_4)=& (1,0,0,0),\\[6pt]
(\pa {u_2} F_1, \pa {u_2} F_2, \pa {u_2} F_3, \pa {u_2} F_4)=& -\frac{F_3}{\beta}{(0,0,0,1)},\\[6pt]
(\pa {u_3} F_1, \pa {u_3} F_2, \pa {u_3} F_3, \pa {u_3} F_4)=& \frac{F_3}{\beta}(0,1,0,0).
\end{array}
\end{equation}
and, by integration,
$$
 F_1= u_1+c_1,\quad F_2=\frac{c_3}{\beta}u_3+c_2,\quad F_3=c_3,\quad F_4=-\frac{c_3}{\beta}u_2+ c_4,
$$
for some real constants $c_i, i=1,\ldots ,4$. After a reparametrization we conclude that $F(u_1, u_2, u_3)=(u_1,u_3,C,u_2)$, which is a sort of hyperplane of the form $x_3=C$ for some real constant $C \neq 0$.

{ We conclude the investigation of these hypersurfaces by computing their mean curvature. The second fundamental form of a generic Codazzi hypersurface of a Siklos spacetime with $\cos \theta =0$ was completely described within the proof of Theorem~\ref{main}, with respect to the basis $\{Y_1,Y_2,Y_3\}$ of tangent vector fields on $M$ given in \eqref{frameI}. For the parallel hypersurfaces we classified above we have the additional conditions that $\theta$ is a real constant  and $a=0$.  Denoted by $(g_M^{ij})=(g_M^{-1}(Y_i,Y_j))$ the components of $g_M^{-1}$ with respect to $\{Y_1,Y_2,Y_3\}$, a direct calculation yields that the mean curvature of $M$ is given by 
$$
\begin{array}{rcl}
{\rm tr}_{g_M} h &=& \frac{1}{3}\sum g_M^{ij}h_{ij} = \frac{1}{3}\left( 2 g_M^{13}h_{13}+g_M^{22}h_{22}\right)\\[4pt]
&=&\frac{1}{3} \left( 2\frac{x_3}{\beta \cos\theta} \cdot 
\frac{\cos^2 \theta}{x_3} +1 \cdot \frac{\cos \theta}{\beta}  \right)=\frac{1}{\beta}  \cos\theta .
\end{array}
$$
(In case 2., as $\sin\theta=0$, the same conclusion holds with $\cos\theta=1$.) Therefore, the above proper parallel  hypersurfaces of Siklos spacetimes are CMC.}
\end{proof}

{We end this paper with the complete classification of proper parallel hypersurfaces in} a special case of great interest, namely,  {\em homogeneous} Siklos spacetimes corresponding to the defining function
$$
H=\varepsilon x_3^{2k}, 
$$
where $\varepsilon=\pm 1$ and $k$ is a real constant \cite{Si}.  This class includes:
\begin{itemize}
\item the pure radiation solution of Petrov type $N$ with a $G_6$ isometry group, first described by Defrise \cite{Defr}, obtained for $k=-1$;
\item the {\em Kaigorodov spacetime}, which is the only homogeneous type-$N$ solution of the Einstein vacuum field equations with $\Lambda \neq 0$ (\cite{Kai},\cite{Pod},\cite[Theorem~12.5]{Ste}) and is obtained for $k=3/2$;
\item the homogeneous solution to Einstein-Maxwell equations investigated first by Ozsv\'{a}th \cite{Ozs} for $k=2$.
\end{itemize}
{The classification of totally geodesic hypersurfaces of these homogeneous spacetimes follows from the more general result described in Corollary \ref{H'_4=0}. So, they are of the form
$$
F(u_1,u_2,u_3) = (u_1,u_2,u_3,\lambda u_2+\mu),
$$ 
for some real constants $\lambda,\mu$. With regard to proper parallel hypersurfaces, we have the following.}

\begin{theorem}
Let $F: M \rightarrow \bar M$ be a proper parallel hypersurface of a Siklos homogeneous spacetime, {described 
by~\eqref{Smet} with respect to global coordinates $(x_1, x_2, x_3, x_4)$ and determined by a defining function 
$H=\varepsilon x_3^{2k}$.} Then there exist local coordinates $(u_1, u_2, u_3)$ on $M$, such that up to isometries of the ambient space, the immersion is given by one of the following cases:

{\smallskip
$\bullet$ For $k=0$:
\begin{itemize}
\vspace{5pt}\item[(1a)]
$F(u_1, u_2, u_3)=\left(u_1,u_3,C e^{u_2},D e^{u_2}\right)$
for some real constants $C$ and $D$;
\vspace{5pt}\item[(1b)] $F(u_1,u_2,u_3)= \left(u_1,C u_3,D e^{u_2}, E e^{u_2}-u_3\right)$, for some real constants $C$, $D$ and $E$;
\end{itemize}}

\smallskip
$\bullet$ For $k=\frac{1}{2}$:
\begin{itemize}
\vspace{5pt}\item[(2)] $F(u_1,u_2,u_3)= \left(u_1,-\varepsilon\frac{2\beta}{C \cos\theta}\sqrt{-\varepsilon\left(\frac{C \cos\theta}{\beta}\right)^2u_3+D}, \beta u_2, -\cot\theta\left(u_2+u_3\right)\right)$, for some real constants $\theta,C,D$;
\end{itemize}

\smallskip
$\bullet$ For $k=1$ and $\varepsilon=1$:
\begin{itemize}
\vspace{5pt}\item[(3)] $F(u_1, u_2, u_3)=\left(u_1, u_3,\rho e^{-u_3}, -u_2 e^{-u_3}\right)$,
for some real constant $\rho$;
\end{itemize}

\smallskip
$\bullet$ For any $k\neq 1$ and $\varepsilon=1$:
\begin{itemize}
\vspace{5pt}\item[(4)] $F(u_1,u_2,u_3)=\left(u_1,\dfrac{u_3^{1-k}}{k-1},u_3, -\dfrac{u_2 u_3}{\beta}\right)$;
\end{itemize}

$\bullet$ For $k=-1$:

\begin{itemize}
\vspace{5pt}\item[(5)] 
${F(u_1, u_2, u_3)=\left(u_1,-\dfrac{u_3}{\beta} ,u_3, -\dfrac{u_2 u_3}{\beta} \right)}$;
\end{itemize}

\smallskip
$\bullet$ For any $k$:
\begin{itemize}
\vspace{5pt}\item[(6)] $F(u_1, u_2, u_3)=(u_1,u_3,C,u_2)$, where $C \neq 0$ is a real constant.
\end{itemize}
\end{theorem}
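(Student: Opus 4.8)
The plan is to feed the specific defining function $H=\varepsilon x_3^{2k}$ into the general machinery of Section~4. First I would record the derivatives $H'_2=H'_4=H''_{34}=H''_{44}=0$, $H'_3=2k\varepsilon x_3^{2k-1}$ and $H''_{33}=2k(2k-1)\varepsilon x_3^{2k-2}$. Since we are after proper parallel hypersurfaces, Theorem~\ref{main} gives $\cos\theta\neq0$, while Proposition~\ref{mainp} forces $\theta$ to be a real constant satisfying \eqref{cpara}, which here reduces to $\sin(2\theta)H''_{33}=0$ along $M$. Because $x_3\neq0$, the factor $H''_{33}$ vanishes along $M$ precisely when $2k(2k-1)=0$, so the argument splits into two branches: (A) $\sin\theta=0$, admissible for arbitrary $k$, and (B) $\sin\theta\neq0$, which can occur only when $k\in\{0,\tfrac12\}$.

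In branch (A) I would normalise $\cos\theta=1$, so that $\xi=a\,\pa1+\frac{x_3}{\beta}\pa3$ and the tangent frame \eqref{frameI} becomes $Y_1=\pa1$, $Y_2=-\frac{x_3}{\beta}\pa4$, $Y_3=\frac{x_3}{\beta}\pa2-a\,\pa3$. Substituting $\sin\theta=0$ and the derivatives of $H$ into the connection and second fundamental form computed in the proof of Theorem~\ref{main}, I expect all but one component of $\nabla^M h$ to vanish identically, the single surviving equation $(\nabla^M_{Y_3}h)(Y_3,Y_3)=0$ collapsing (using $Y_3(x_3)=-a$ and the symmetry condition $Y_2(a)=0$) to
\[
Y_3^2(a)+\frac{a\,Y_3(a)}{x_3}=\frac{2k(k+1)\varepsilon}{\beta^2}\,a\,x_3^{2k}.
\]
The solution $a=0$ yields $x_3=\mathrm{const}$ and, after integrating the frame equations, the hyperplane of case~(6), valid for every $k$. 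For $a\neq0$ I would seek a power solution $a=c\,x_3^{k+1}$: matching exponents and coefficients gives $c^2=\varepsilon/\beta^2$ whenever $k(k+1)\neq0$, whence $\varepsilon=1$ and $a=x_3^{k+1}/\beta$. Integrating with $F_3=u_3$ then produces case~(4) for $k\neq1$, while at $k=1$ the antiderivative of $x_3^{-k}$ degenerates and the reparametrisation $F_2=u_3$ must be used instead, giving case~(3). At the degenerate value $k=-1$ the right-hand side vanishes and a second independent solution $a\propto x_3$ appears, leading to case~(5).

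In branch (B) I would keep the full frame \eqref{frameI} and impose the parallelism equations \eqref{p1}--\eqref{p3} together with the symmetry conditions; with $\theta$ constant and $\sin\theta\cos\theta\neq0$, equation~\eqref{p3} becomes $\frac{a^2}{x_3}+Y_3(a)+\frac{k\varepsilon\cos^2\theta}{\beta^2}x_3^{2k+1}=0$. For $k=0$ this is $Y_3(a)=-a^2/x_3$, whose solutions $a=0$ and $a\neq0$ integrate to cases~(1a) and~(1b) respectively; for $k=\tfrac12$ one solves the resulting Riccati-type equation and integrates the frame to obtain case~(2), where the square root and the explicit factor $\varepsilon$ reflect the sign of the relevant constant of integration. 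In each case the immersion is recovered by integrating $\partial_{u_1}F=Y_1$, $\partial_{u_2}F=Y_2$, $\partial_{u_3}F=\alpha Y_2+\gamma Y_3$ after choosing $\alpha,\gamma$ so that the coordinate vector fields commute, exactly as in the proof of Theorem~\ref{42}, and redundant constants are removed by isometries of the ambient space.

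I expect the main obstacle to be the nonlinear second-order ODE for $a$, coupled to $Y_3(x_3)=-a$: its solution changes character (constant, power, exponential or logarithmic) according to the arithmetic of $k$, and the reality of the coefficient $c$ forces the restriction $\varepsilon=1$ in the generic power solution. Keeping track of the exceptional values $k\in\{0,\tfrac12,1,-1\}$, verifying that no further components of $\nabla^M h$ impose extra constraints beyond \eqref{p1}--\eqref{p3} and the symmetry conditions, and recognising when two formally distinct solutions coincide up to an ambient isometry are the delicate points of the argument.
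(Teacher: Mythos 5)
Your proposal is correct and follows essentially the same route as the paper: the constancy of $\theta$, the reduction of \eqref{cpara} to $\cos\theta\sin\theta\,H''_{33}=0$, the same parallelism equations (your collapsed ODE in branch (A) is exactly \eqref{p4} specialized to $H=\varepsilon x_3^{2k}$, and your branch (B) equation is \eqref{p3n}), and the same ansatz-plus-frame-integration procedure, your power ansatz $a=c\,x_3^{k+1}$ being the paper's exponential ansatz $\psi=\lambda e^{-2mu_3}$ rewritten in terms of $x_3=\rho e^{-u_3}$. The only difference is organizational: you split on $\sin\theta$ before $a$, whereas the paper first disposes of the $a=0$ cases by invoking Theorem~\ref{42} and then splits the remaining $a\neq 0$ analysis by $\sin\theta$.
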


\begin{proof}

{As $H$ only depends on $x_3$,} condition \eqref{cpara} becomes
\begin{equation}\label{cparax_3}
\cos\theta \sin \theta H''_{33}=0.
\end{equation}
We already know from Theorem~\ref{main} that {the case for $\cos\theta=0$ does not yield any proper parallel hypersurfaces. Moreover, parallel hypersurfaces with $a=0$ were already classified in general in Theorem~\ref{42}.  
For a defining function of the special form $H=\varepsilon x_3^{2k}$, Case~1. of Theorem~\ref{42} easily yields $k=0$ and leads to case (1a) here.  Case~2. of Theorem~\ref{42} provides solutions for all values of $k$ and corresponds here to case (6).} 

So, we {are left to investigate} the cases with $a \neq 0$. We shall consider separately the cases $\sin\theta\neq 0$ and $\sin\theta=0$

\medskip
\textbf{Case 1: $\boldsymbol{\sin\theta}\mathbf{\neq 0}$.}
{
In this case, from \eqref{cparax_3} we get $H''_{33}=0$. Then, $H(x_3)=\varepsilon x_3$, that is, $k=\frac{1}{2}$ (excluding the trivial case $k=0$, which we will treat later). From \eqref{p3} we get
\begin{equation}\label{p3n}
Y_3(a)=-\frac{a^2}{x_3}-\varepsilon\frac{x_3^2}{\beta^2}\frac{\cos^2\theta}{2}.
\end{equation}
Next, since $\theta$ is constant, we can rewrite the last symmetry condition as
$$
Y_2(a)=a\frac{\sin\theta}{\beta}.
$$}

We now look for a system of local coordinates $(u_1, u_2, u_3)$ on $M$, such that
\begin{equation}\label{cooMa11}
    \pa {u_1}=Y_1,\qquad 
    \pa {u_2}=Y_2, \qquad
    \pa {u_3}=\alpha Y_2+ \gamma Y_3,
\end{equation}
for some smooth functions $\alpha,\gamma$ on  $M$. Requiring that $[ \pa {u_2},\pa {u_3}]=0$, we get
\begin{align*}
\begin{cases}
   Y_2(\alpha)=-\gamma\frac{a}{x_3} ,\\[5pt]
   Y_2(\gamma)=-\gamma\frac{\sin\theta}{\beta} .
\end{cases}
\end{align*}
Observe that we only need one solution for $\alpha$ and $\gamma$ in the system above in order to find a coordinate system 
$(u_1, u_2, u_3)$ on the surface $M$. So, we take
$$
\alpha=\frac{1}{\sin\theta}\frac{\beta}{x_3}, \quad \gamma=\frac{1}{a}.
$$
With respect to the coordinates introduced above, the symmetry condition reads
$$
\pa {u_1}(a)=0, \quad \pa {u_2} (a)=a\frac{\sin\theta}{\beta},
$$
so that there exists a positive smooth function {$\psi=\psi(u_3)$} on $M$ such that
$$
a(u_2,u_3)=\psi(u_3)e^{\frac{\sin\theta}{\beta}u_2}.
$$

Denote now by $F: M \rightarrow \bar M, \; (u_1, u_2, u_3)\mapsto(F_1(u_1,u_2,u_3), \ldots, F_4(u_1,u_2,u_3))$ the immersion of the hypersurface in the local coordinates introduced above. By \eqref{frameI}, we obtain
\begin{equation} \label{eqpaA11}
\begin{array}{ll}
(\pa {u_1} F_1, \pa {u_1} F_2, \pa {u_1} F_3, \pa {u_1} F_4)=& (1,0,0,0),\\[6pt]
(\pa {u_2} F_1, \pa {u_2} F_2, \pa {u_2} F_3, \pa {u_2} F_4)=& \frac{F_3}{\beta}{(0,0,\sin\theta,-\cos\theta)},\\[6pt]
(\pa {u_3} F_1, \pa {u_3} F_2, \pa {u_3} F_3, \pa {u_3} F_4)=& (0,\frac{F_3}{\beta}\frac{\cos\theta}{a},0,-\cot\theta).
\end{array}
\end{equation}
Integrating \eqref{eqpaA11} we find
$$
\begin{array}{ll}
F_1= u_1+c_1, &\quad F_2= \frac{c_3 \cos\theta}{\beta}\displaystyle\int_{c_2}^{u_3}{\psi(s)^{-1}}ds,\\[10pt] F_3=c_3e^{\frac{\sin\theta}{\beta}u_2},&\quad F_4= -\cot\theta\left(\frac{c_3}{\beta}e^{\frac{\sin\theta}{\beta}u_2}+u_3\right)+ c_4
 \end{array}
$$
for some real constants $c_i, i=1,\ldots ,4$. Consequently, condition \eqref{p3n} rewrites as
$$
(\psi^2)'=-\varepsilon\left(\frac{c_3 \cos\theta}{\beta}\right)^2,
$$
which gives $\psi(u_3)=\sqrt{-\varepsilon\left(\frac{c_3 \cos\theta}{\beta}\right)^2u_3+c_5}$, with $c_5$ a real constant.

Then, substituting this result in the parametrization and applying isometries of the ambient space one obtains the case (2) of the theorem.

We  now  consider the trivial case where $k=0$. Let us observe that \eqref{p3n} rewrites as
$$
Y_3(a)=-\frac{a^2}{x_3}.
$$
Then, using the coordinates found in \eqref{cooMa11} and the consequent expression for the function $a$, namely, $a=\psi(u_3)e^{\frac{\sin\theta}{\beta}u_2}$, the condition above reduces to $\psi'=0$, that is $a=R e^{\frac{\sin\theta}{\beta}u_2}$ for some real constant $R$.

Substituting this result in the correspondent parametrization and applying isometries of the ambient space and some reparametrizations, one obtains the case (1b) of the theorem.

\medskip
\textbf{Case 2: $\boldsymbol{\sin\theta}\mathbf{=0}$.}

In this case, \eqref{frameI} becomes
\begin{equation}\label{frameA2}
Y_1 = \pa1,\quad Y_2 = -\frac{x_3}{\beta} \pa4, \quad Y_3 = \frac{x_3}{\beta}\pa2-a\pa3.
\end{equation}
Then, the tangential connection is determined by
$$
\begin{array}{lll}
\nabla^M_{Y_1}Y_1=0, &\quad \nabla^M_{Y_1}Y_2=0, &\quad \nabla^M_{Y_1}Y_3=0, \\[5pt]
\nabla^M_{Y_2}Y_1=0, &\quad \nabla^M_{Y_2}Y_2=-\frac{a}{\beta}Y_1,  &\quad \nabla^M_{Y_2}Y_3=\frac{\beta}{x_3} a Y_2(a) Y_1+\frac{a}{x_3} Y_2, \\[5pt]
\nabla^M_{Y_3}Y_1=0 &\quad \nabla^M_{Y_3}Y_2=0, &\quad \nabla^M_{Y_3}Y_3=\left(-a\frac{x_3}{\beta}\frac{H'_3}{2}-\frac{a}{\beta}H+a\frac{\beta}{x_3}Y_3(a)\right)Y_1+\frac{a}{x_3}Y_3,
\end{array}
$$
{and} the only non-vanishing components of second fundamental form are
$$
h(Y_1,Y_3)=\frac{1}{x_3},\quad
h(Y_2,Y_2)=\frac{1}{\beta},\quad
h(Y_3,Y_3)=\frac{H}{\beta}-\frac{x_3}{\beta}\frac{H'_3}{2}-\frac{\beta}{x_3}Y_3(a),
$$
where we used the symmetry condition for $h$, which yelds $Y_2(a)=0$.

We now  require the immersion of $M$ to be parallel and we get the following condition:
\begin{align}
\label{p4} 0=(\nabla^M _{Y_3} h) (Y_3,Y_3)=\frac{3}{2}\frac{a}{\beta}H'_3+a\frac{x_3}{\beta}\frac{H''_{33}}{2}-a\frac{\beta}{x_3^2}Y_3(a)-\frac{\beta}{x_3}Y_3(Y_3(a)).
\end{align}

As before, we now look for a system of local coordinates $(u_1, u_2, u_3)$ on $M$, such that
\begin{equation}\label{cooMa21}
    \pa {u_1}=Y_1,\qquad 
    \pa {u_2}=Y_2, \qquad
    \pa {u_3}=\alpha Y_2+ \gamma Y_3,
\end{equation}
for some smooth functions $\alpha,\gamma$ on  $M$. Requiring that $[ \pa {u_2},\pa {u_3}]=0$, we get
\begin{align*}
\begin{cases}
   Y_2(\alpha)=-\gamma\frac{a}{x_3},\\[5pt]
   Y_2(\gamma)=0,
\end{cases}
\end{align*}
which is satisfied by 
$$
\alpha=-{u_2}, \quad \gamma=\frac{x_3}{a},
$$
where $a=a(u_3)$.

Denote now by $F: M \rightarrow \bar M, \; (u_1, u_2, u_3)\mapsto(F_1(u_1,u_2,u_3), \ldots, F_4(u_1,u_2,u_3))$ the immersion of the hypersurface in the local coordinates introduced above. By \eqref{frameA2}, we obtain
\begin{equation} \label{eqpaA21}
\begin{array}{ll}
(\pa {u_1} F_1, \pa {u_1} F_2, \pa {u_1} F_3, \pa {u_1} F_4)=& (1,0,0,0),\\[6pt]
(\pa {u_2} F_1, \pa {u_2} F_2, \pa {u_2} F_3, \pa {u_2} F_4)=& -\frac{F_3}{\beta}{(0,0,0,1)},\\[6pt]
(\pa {u_3} F_1, \pa {u_3} F_2, \pa {u_3} F_3, \pa {u_3} F_4)=& (0,\frac{F_3^2}{\beta}\frac{1}{a},-F_3,\frac{F_3}{\beta}u_2).
\end{array}
\end{equation}

Integrating \eqref{eqpaA21} we find
$$
 F_1= u_1+c,\quad F_3=\rho e^{-u_3},\quad F_4= -\frac{\rho}{\beta}u_2 e^{-u_3}+ d,
$$
for some real constants $c, \rho, d$ and we put $a(u_3)=F_3(u_3)\psi(u_3)$ for some smooth positive function $\psi$ on $M$. 

In this case, {as $H=\varepsilon x_3^{2k}$, condition \eqref{p4} rewrites in function of $\psi(u_3)$ {as follows:}
{\begin{equation}\label{eqp}
2\varepsilon k(k+1)\left(\frac{\rho^k}{\beta}\right)^2e^{-2k u_3}=(\psi')^2-2\psi\psi'+\psi\psi''=(\psi(\psi'-\psi))'.
\end{equation}}
A solution to \eqref{eqp} is given in the form {$\psi(u_3)=\lambda e^{-2mu_3}$ where $\lambda$ and $m$ are real constants and we must consider separately the cases $m=k/2$ or $m\neq k/2$.}

{\textbf{Case 2.1: $\boldsymbol{\sin\theta}\mathbf{=0}$ and $\mathbf{m=k/2}$.}}
In this case {$\psi(u_3)=\lambda e^{-k u_3}$ is a solution if and only if $\lambda=\frac{\rho^k}{\beta}$ and $\varepsilon=1$.}

Applying isometries of the ambient space, we find the following parametrizations for $M$:
\begin{align*}
F(u_1, u_2, u_3)=\begin{cases}
\left(u_1, u_3,\rho e^{-u_3}, -u_2 e^{-u_3}+ d\right) \quad &\mathrm{if}\, k=1,\\[4pt]
\left(u_1,\frac{\rho^{1-k}}{k-1}e^{(k-1)u_3},\rho e^{-u_3}, -\frac{\rho}{\beta}u_2 e^{-u_3}+ d\right) \quad &\mathrm{if}\, k\neq1.
\end{cases}
\end{align*}
After a reparametrization and a translation in the $x_4$-direction, which actually is an isometry of ($\bar M$, $g$), we obtain cases (3) and (4) of the theorem.

\medskip
\textbf{Case 2.2: $\boldsymbol{\sin\theta}\mathbf{= 0}$ and $\mathbf{m\neq k/2}$.}
In this case, both sides of equation \eqref{eqp} must vanish. Then, since $k=0$ yields $m=k/2$, which we excluded, we deduce that $k=-1$. This is a special case in the homogeneous class of Siklos spacetimes: the pure radiation solution of Petrov type $N$ with a $G_6$ isometry group (\cite{Defr}). From \eqref{eqp} we deduce that $\psi$ ia a constant and so, $a(u_3)=\rho e^{-u_3}$.

As before, applying isometries of the ambient space, we find the following parametrizations for $M$:
\begin{align*}
F(u_1, u_2, u_3)=\left(u_1,-\frac{\rho}{\beta} e^{-u_3},\rho e^{-u_3}, -\frac{\rho}{\beta}u_2 e^{-u_3}+ d \right).
\end{align*}
Again, after a reparametrization and a translation in the $x_4$-direction we obtain the case (5) of the theorem.}
\end{proof}

\end{document}